\definecolor{gr}{rgb}{0,0.50, 0.50}
\definecolor{mg}{rgb}{0.85,0,0.85}
	\tikzset{dimens1/.style={->,>=stealth}}
	\definecolor{blue1}{HTML}{f4eb0a}
	\definecolor{blue1a}{HTML}{2bc4b0}
\newcommand\blfootnote[1]{%
  \begingroup
  \renewcommand\thefootnote{}\footnote{#1}%
  \addtocounter{footnote}{-1}%
  \endgroup
}
\newcommand{\bk}{\color{black}}
\def\R{\mathbb{R}}
\def\Z{\mathbb{Z}}
\def\ds{\displaystyle}
\def\e{\varepsilon}
\def\o{\Omega}
\def\ue{u^\e}
\def\uoe{u^\varepsilon_1} 
\def\ute{u^\varepsilon_2}
\def\lto{L^2(\Omega)}
\def\gae{\Gamma^\varepsilon} 
\def\Ge{\Gamma^\e}
\def\ep{\varepsilon}
\def\te{\mathcal{T}_\varepsilon}
\def\toe{\mathcal{T}^\varepsilon_1}
\def\tte{\mathcal{T}^\varepsilon_2}
\def\rw{\rightarrow}
\def\ru{\rightharpoonup}
\def\G{\Gamma}
\def\n{\nabla}
\def\g{\gamma}
\def\z{\zeta}
\def\te{\mathcal{T}^\e}
\def\beq{\begin{equation}}
\def\eeq{\end{equation}}
\def\ba{\begin{array}}
\def\ea{\end{array}}
\def\d{\mathcal{D}}
\def\f{\varphi}
\newtheorem{theorem}{Theorem}[section]
\newtheorem{lemma}[theorem]{Lemma}
\newtheorem{proposition}[theorem]{Proposition}
\newtheorem{definition}[theorem]{Definition}
\theoremstyle{definition}
\newtheorem{remark}[theorem]{Remark}
\numberwithin{equation}{section}
	\title{On the homogenization of a Signorini-type problem\\ in a domain with inclusions}
	\author{Sara Monsurr\`{o}$^{*\dagger}$, Carmen Perugia$^{\ddagger,\dagger}$ and Federica Raimondi$^{*\dagger}$}
	\date{}
\begin{document}

	%	\numberwithin{equation}{section}
	\maketitle
	\vskip -8mm
	{\scriptsize{* Universit\`a degli Studi di Salerno, Dipartimento di Matematica, Via Giovanni Paolo II 132, 84084 Fisciano (SA), Italy.\hskip 2mm Email: smonsurro@unisa.it,$\;$ fraimondi@unisa.it\\
\indent $\ddagger$ Universit\`a degli Studi del Sannio, Dipartimento di Scienze e Tecnologie, Via dei Mulini 74, 82100 Benevento, Italy.\hskip 2mm Email: cperugia@unisannio.it
\\ $\dagger$ Member of the Gruppo Nazionale per l'Analisi Matematica, la Probabilit\`a e le loro
Applicazioni (GNAMPA) of the Istituto Nazionale di Alta Matematica (INdAM).}}

	\begin{abstract} 
	In this paper we investigate the effect of a Signorini-type interface condition on the asymptotic behaviour, as $\e$ tends to zero, of problems posed in $\e$-periodic domains with inclusions. The Signorini-type condition is expressed in terms of two complementary equalities involving the jump of the solution on the interface and its conormal derivative via a parameter $\gamma$. Our problem models the heat exchange in  a medium hosting an $\e$-periodic array of thermal conductors in presence of impurities distributed on some regions of the interface. Different limit problems are obtained according to different values of $\g$. 
\blfootnote{\textbf{Keywords: }{Homogenization, Domains with inclusions, Imperfect interface, Signorini-type condition, Variational inequality, Unfolding method.}\vskip 0.3mm}
\blfootnote{\textbf{MSC:} 35B27, 35J25, 35J20, 35J60, 35R35. }
	\end{abstract}
	
\section{Introduction}\label{intro}
The Signorini problem was presented for the first time in \cite{Signo} in the context of linear elasticity, and, eventually, it was given a rigorous mathematical justification in \cite{fichera}. It concerns the equilibrium configuration of an elastic body on a rigid frictionless surface, subject only to its mass forces.\\
Signorini-type boundary conditions appear in many problems in applied mathematics deriving from engineering and physics. We can think, for instance, to lubrication and filtration processes,  hydrodynamics, plasticity, crack theory, optimal control problems, etc.\\
The weak formulations of such kind of problems involve variational inequalities corresponding to nonlinear free boundary-value problems. For mathematical and historical backgrounds we refer to Duvaut and Lions \cite{DL}, Kinderleherer and Stampacchia \cite{KS} and Brezis \cite{Bre}. The unique solvability of these inequalities has been investigated in \cite{LS} and \cite{Signo1} (see also \cite{lions}).\\ 
Successively, this type of problems have been studied in different geometrical settings. 
For instance, the homogenization of boundary value problems involving Signorini boundary conditions has been treated in \cite{GM1}, \cite{GM2} and \cite{MNW} in domains with oscillating boundaries in both linear and nonlinear cases. In \cite{CET}, \cite{CMT} and \cite{pastu} perforated domains have been considered.
Cracks and layers were taken into account in the works of \cite{DGO} and \cite{GMO}. 

In our paper we consider an open bounded domain $\Omega$ of $\mathbb{R}^N$ made up of two disjoint components $\o^\e_1$ and $\o^\e_2$ separated by a Lipschitz-continuous interface $\Gamma^\e$, where $\e$ is a small positive parameter tending to zero. In this domain we deal with the following problem:
\begin{equation}\label{probintro}
\left\{\begin{array}{lllll}
\displaystyle
-\operatorname{div}(A^\e\n \uoe)=f & \hbox{ in } \o_1^\e, \\[2mm]
\displaystyle
-\operatorname{div}(A^\e\n \ute)=f & \hbox{ in } \o_2^\e, \\[2mm]
\displaystyle
(A^\e\n \uoe)\cdot\nu_1^\e=-(A^\e\n \ute)\cdot\nu_2^\e & \hbox{ on } \Ge,\\[2mm]
\displaystyle
[\ue]\geq 0, \quad (A^\e\n \uoe)\cdot\nu_1^\e+\e^\g h^\e[\ue] \geq 0 & \hbox{ on } \Ge,\\[2mm]
\displaystyle
[\ue]((A^\e\n \uoe)\cdot\nu_1^\e+\e^\g h^\e[\ue])=0 & \hbox{ on } \Ge,\\[2mm]
\displaystyle
\uoe =0 & \hbox{ on } \partial \Omega,
\end{array}\right.
\end{equation}
where $\nu_i^\e$ is the unit external normal vector to $\o_i^\e$, $i=1,2$ and $[\cdot]$ is the jump of the solution on the interface. In \eqref{probintro}, the coefficients $A^\e$ and $h^\e$ satisfy suitable hypotheses of uniform ellipticity, boundedness and periodicity, $\g\in \R$ and $f\in L^2(\o)$.\\
Problem \eqref{probintro} can modelize the heat exchange in  a medium hosting an $\e$-periodic array of thermal conductors. Actually, in nature the presence of  impurities distributed in some regions of the interface may give rise to a discontinuity of the temperature field. This is why the Signorini interface condition in the fourth and fifth lines of \eqref{probintro} seems more realistic to describe such real phenomena. Indeed, it means that one can distinguish two a priori unknown subsets of $\gae$ where the couple $u^\e=(\uoe, \ute)$ satisfies the two alternative equalities:
$$[\ue]=0\qquad\text{or}\qquad(A^\e\n \uoe)\cdot\nu_1^\e+\e^\g h^\e[\ue]=0.$$
From a physical point of view there exist a zone of $\gae$ of perfect conduction i.e. absence of jump, and one characterized by an imperfect contact transmission condition. Here, the external thermal flux is proportional to the jump through a coefficient depending on the period $\e$ and on the parameter $\gamma$ representing the order of magnitude of the resistance with respect to $\e$.

Our aim is to investigate the effect of the Signorini interface condition on the asymptotic behaviour, as $\e$ tends to zero, of problem \eqref{probintro}.

We prove that the thermal properties of the limit problem vary according to the different values of the parameter $\gamma$. 
For $\gamma<-1$ and $-1<\gamma<1$  the homogenized problems are linear and standard (see \cite{DoLNTa}, \cite{DoMo}, \cite{Mo}) meaning that the  Signorini-type interface condition in the $\e$-problem does not affect the limit behaviour.

 More precisely, if $\gamma<-1$, we get the classical homogenized problem of Bensoussan-Lions-Papanicolau in a fixed domain, see \cite{ben}.
Actually, the limit problem behaves as in presence, at $\e$-level, of the only transmission condition on the interface of type $[\ue]=0$. Hence, the part of $\Ge$ where the two temperature fields differ is such small that it does not affect the asymptotic behaviour and we do not have a variational inequality anymore.

On the contrary, when $\gamma\in ]-1,1[$, we obtain the same limit behaviour of \cite{DoLNTa} and \cite{DoMo} where, at $\e$-level, the only condition $(A^\e\n \uoe)\cdot\nu_1^\e +\e^\g h^\e[\ue]=0$ is assumed on the interface. More in particular, we get the classical homogenized problem of Cioranescu-S.J. Paulin in the case of a perforated domain with Neumann condition on the boundary of the holes, see \cite{cp}. 
Indeed, the flux related to $u^\e_2$ asymptotically vanishes and the effective diffusion of the first phase is the one obtained when there is no material occupying $\Omega^\e_2$. However, in the limit problem one has $f$ and not $\theta_1 f$ ($\theta_1$ being the proportion of the material occupying $\o_1^\e$) meaning that the heat exchange takes into account also the source term inside the inclusions.

The most interesting cases are $\gamma=-1$ and $\gamma=1$ since the two regions of $\Ge$ where the Signorini conditions are satisfied at $\e$-level are both significant at the limit.

For $\gamma=-1$, we obtain a nonlinear homogenized problem (cf. \eqref{probhom}) where the effective diffusion term takes into account the presence of the contact barrier. Indeed, the associated nonlinear operator is defined through the solution of a cell problem consisting in a variational inequality with a jump on the interface $\G$ (cf. \eqref{ahom}-\eqref{celltircase}). Here, the nonnegativity constraint of the jump on $\gae$ gives rise to a nonnegative jump on the interface $\G$ in the cell problem.

At last, when $\gamma=1$, the Signorini-type interface condition at $\e$-level leads to an obstacle-type homogenized problem (cf. \eqref{obstacle}). Unlike previous cases, at the limit we get two different temperature fields. In some sense, as in the case $\gamma\in ]-1,1[$, the part of the interface where the condition $(A^\e\n \uoe)\cdot\nu_1^\e +\e^\g h^\e[\ue]=0$ is assumed prevails on the one where $[\ue]=0$ holds. However, the gap between the two temperatures on $\gae$ is so big that, 
even if the flux related to $u^\e_2$ asymptotically vanishes as for $\gamma\in ]-1,1[$, the material occupying the inclusions contributes to the heat exchange. Indeed, the initial jump between the two temperature fields does not vanish but, at the limit, it reflects on the whole domain. As a consequence,  the homogenized problem consists in a variational inequality with a lower order term depending on the physical properties of the interface and the gap between the two temperature fields.

The main difficulty of this paper consists in the choice of appropriate test functions allowing to identify the different limit problems.
Indeed, only for $\g<-1$ it is possible to use quite standard arguments.
For $\g=-1$, the required test functions lead to handle products of two weakly convergent sequences. Lower semi-continuity arguments, suitable in the framework of variational inequalities, allow us to manage this issue.
Finally, in the cases $\g\in]-1,1[$ and $\g=1$ we need to consider more regular test functions together with opportune density results. Moreover, when $\g=1$, we take advantage of lower semi-continuity arguments again.

The homogenization of boundary value problems involving this kind of composite domains was performed at first by using the method of Tartar of oscillating test functions in \cite{Mo} and \cite{Moer} for $\gamma \leq-1$ and in \cite{DoMo} for $\gamma >-1$. Eventually, the same results were obtained by means of the unfolding method in \cite{DoLNTa} and \cite{donngu} in the linear and nonlinear case, respectively.
Further problems involving two-component domains with jump conditions have been treated in \cite{AmAnT}, \cite{AMR}, \cite{RCk}, \cite{RT}, \cite{FMPhomo}, \cite{mpr}, \cite{Rai} (and the references therein) for what concerns the asymptotic behaviour, and in \cite{FMP}, \cite{MNP}, \cite{MNP2} and \cite{MP1} as for control problems. 

The paper is organized into several sections. In Section 2 we set the problem by describing the two component domain $\Omega$ and the functional framework. Then, we state our main theorem describing the homogenization result. In Section 3 we show existence, uniqueness and a uniform a priori estimate of the solution to the fine-scale problem. In Section 4 we briefly recall the definition and the properties of the unfolding operators in domains with inclusions by adopting the same notations as in \cite{donngu}. Finally, in Section 5 we detail the proof of the main result according to the different values of the parameter $\gamma$.

\section{Setting of the problem and main result}\label{secsetting}

\subsection{The two-component domain} 
Let $\o$ be a bounded open set in $\R^N$, $N\geq 2$, with a Lipschitz-continuous boundary $\partial \o$. We denote by $Y\doteq \prod_{i=1}^N [0,l_i[$ the reference cell, with $l_i>0, i=1, ..., N$. We assume that $Y\doteq Y_1\cup \overline{Y_2}$, where $Y_1$ and $Y_2$ are two disjoint connected open sets such that $Y_2\neq \emptyset$, $\overline{Y_2}\subset Y$ and the common boundary $\G\doteq\partial Y_2$ is Lipschitz-continuous.\\
For any $k \in \Z^N$, let $k_l\doteq(k_1 l_1, ..., k_N l_N)$ and
$$Y^k\doteq k_l+Y, \qquad Y^k_i\doteq k_l +Y_i, \qquad i=1,2.$$
\noindent
Given a positive parameter $\e$ taking values in a sequence converging to zero, we set:\vskip1mm
$$J_\e\doteq \{k\in\Z^N |\ \e \overline{Y^k_2}\subset \o\}$$
 and
$$\o_2^\e\doteq \bigcup_{k\in J_\e} \e Y^k_2, \qquad \o_1^\e\doteq \o\setminus \overline{\o_2^\e}, \qquad \Ge\doteq\partial \o_2^\e.$$
Therefore, one has $\o=\o_1^\e \cup \o_2^\e \cup \Ge$. Moreover, for the sake of simplicity, we suppose that the second component doesn't meet the boundary of the domain.%   , see Figure \ref{figD}.\\
%\vfill
%\begin{figure}[ht]
%\begin{center}
%\includegraphics[width=0.7\linewidth]{2comp}
%\end{center}
%\vskip -10 true mm
%\caption{The two-component domain $\o$ and the reference cell $Y$}
%\label{figD}
%\end{figure}

Throughout the paper 
\begin{itemize}
\item $\displaystyle \mathcal{M}_E(v)$ is the average on $E$ of a function $v \in L^1(E)$, where  $E$ is an open subset of $ \R^{N-1}$ or  $ \R^N$;
\item $\displaystyle\chi\strut_{\omega}$ is the characteristic function of any open set $\omega$ of $\R^N$;
\item $\sim$ is the zero extension to the whole of $\o$ of functions defined on $\o^\e_1$ or $\o^\e_2$;
\item $\theta_i\doteq\frac{|Y_i|}{|Y|}$, $i=1,2$;
\item $C$ denotes different positive constants independent of $\e$;
\item $\mathcal{M}(\alpha, \beta, Y)$ is the set of the $Y$-periodic matrix fields $A=(a_{i,j})_{1 \leq i,j \leq N} \in (L^{\infty}(Y))^{N \times N}$ such that 
$(A(x)\lambda,\lambda)\geq \alpha |\lambda|^2$ and $|A(x)\lambda|\leq \beta|\lambda|, \ \forall \lambda \in \R^N$ and a.e. in $Y$,
with $\alpha, \beta \in \R$, $0<\alpha<\beta$;
\item $\{ e_1, ..., e_N \}$ is the canonical basis of $\R^N$;
\item $[v]$ denotes the jump through $\gae$ of a function $v=(v_1,v_2)$ where $v_i$ is defined in $\o_i^\e$, $i=1,2$.
\end{itemize} 
\subsection{The problem}
In the domain previously described, we consider the following problem:
\begin{equation}\label{prob}
\left\{\begin{array}{lllll}
\displaystyle
-\operatorname{div}(A^\e\n \uoe)=f & \hbox{ in } \o_1^\e, \\[2mm]
\displaystyle
-\operatorname{div}(A^\e\n \ute)=f & \hbox{ in } \o_2^\e, \\[2mm]
\displaystyle
(A^\e\n \uoe)\cdot\nu_1^\e=-(A^\e\n \ute)\cdot\nu_2^\e & \hbox{ on } \Ge,\\[2mm]
\displaystyle
[\ue]\geq 0, \quad (A^\e\n \uoe)\cdot\nu_1^\e+\e^\g h^\e[\ue] \geq 0 & \hbox{ on } \Ge,\\[2mm]
\displaystyle
[\ue]((A^\e\n \uoe)\cdot\nu_1^\e+\e^\g h^\e[\ue])=0 & \hbox{ on } \Ge,\\[2mm]
\displaystyle
\uoe =0 & \hbox{ on } \partial \Omega,
\end{array}\right.
\end{equation}
where $\g\in \R$ and $\nu_i^\e$ is the unit external normal vector to $\o_i^\e$, $i=1,2$.

Our aim is to study the asymptotic behavior, as $\e$ goes to zero, of problem \eqref{prob} under the following assumptions on the data:
\vskip 1mm
(\textbf{A$1$}) \quad For any $\e>0$, the coeffficients matrix $A^\e(x)\doteq A\left(\frac{x}{\e}\right)$ a.e. in $\o$, where $A$ belongs to $\mathcal{M}(\alpha, \beta, Y)$;
\vskip 1mm
(\textbf{A$2$})\quad $\gamma\leq 1$;
\vskip 1mm
(\textbf{A$3$})\quad $f\in L^2(\Omega)$;
\vskip 1mm
(\textbf{A$4$})\quad $h^\e$ is defined by $h^\e(x)\doteq h(\frac{x}{\e})$ a.e. on $\gae$, where $h$ is a $Y$-periodic function in $L^{\infty}(\G)$ such that
$$\hbox{there exists } h_0 \in \R :  0 < h_0 < h(y) \hbox{ a.e.  on }\G.$$

Let us remark that we assume hypothesis (\textbf{A$2$}) since, in \cite{hum}, H.C. Hummel showed that one cannot expect uniformly bounded solutions when $\g>1$.

\subsection{The functional framework and the weak formulation}
Let $V^\e\doteq \{v \in H^1(\o_1^\e) \ |\ v=0 \hbox{ on } \partial \o\}$ endowed with the norm
$$\|v\|_{V^\e}\doteq \|\n v\|_{L^2(\o_1^\e)}.$$
It is known (see for instance \cite[Lemma $1$]{cp}) that a Poincar\'e inequality in $V^{\e}$ holds with a constant $C_P$ independent of $\e$, that is
\begin{equation}\label{Poi}
\|v\|_{L^2(\o^\e_1)}\leq C_P\|\n v\|_{L^2(\o^\e_1)} \quad \forall v \in V^\e.
\end{equation}
Consequently, the norm in $V^\e$ is equivalent to the one in $H^1(\o_1^\e)$ via a constant independent of $\e$.\\
Let $H^\e_\g$ be the space defined by
$$H^\e_\g\doteq \{v=(v_1,v_2)|\ v_1 \in V^\e,\ v_2 \in H^1(\o_2^\e)\},$$
which, after the identification $\n v:=  \widetilde{\n v_1} + \widetilde{\n v_2}$, is a Hilbert space
equipped with the norm
$$\|v\|^2_{H^\e_\g}\doteq\|\n v_1\|^2_{L^2(\o_1^\e)}+ \|\n v_2\|^2_{L^2(\o_2^\e)} + \e^\g\|[v]\|^2_{L^2(\Ge)}.$$
\begin{proposition}[\cite{DoJo}\cite{DoLNTa}\cite{Mo}]\label{eqnorm}
Let $\gamma\leq 1$. There exist two positive constants $C_1$ and $C_2$, independent of $\e$, such that 
$$\forall v \in H^\e_\g, \qquad C_1 \|v\|^2_{V^\e \times H^1(\o_2^\e)}\leq \|v\|^2_{H^\e_\g}\leq C_2 (1+\e^{\g-1})\|v\|^2_{V^\e \times H^1(\o_2^\e)}.$$
Therefore, if $v^\e=(v^\e_1,v^\e_2)$ is a bounded sequence in $H^\e_\g$, then
\begin{equation*}
\begin{array}{lll}
\displaystyle
\|v^\e_1\|_{H^1(\o^\e_1)}\leq C,\\[3mm]
\displaystyle
\|v^\e_2\|_{H^1(\o^\e_2)}\leq C,\\[3mm]
\displaystyle
\|[v]\|_{L^2(\G^\e)}\leq C \e^{-\frac{\gamma}{2}},
\end{array}
\end{equation*}
with $C$ positive constant independent of $\e$.
\end{proposition}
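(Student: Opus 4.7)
The plan is to establish the two bounds directly by combining cell-wise Poincaré and trace inequalities with an $\varepsilon$-scaling argument, then read off the three consequences. Write $V^\varepsilon\times H^1(\Omega_2^\varepsilon)$ with the norm
$$\|v\|^2_{V^\varepsilon\times H^1(\Omega_2^\varepsilon)}=\|\nabla v_1\|_{L^2(\Omega_1^\varepsilon)}^2+\|v_2\|_{L^2(\Omega_2^\varepsilon)}^2+\|\nabla v_2\|_{L^2(\Omega_2^\varepsilon)}^2,$$
so $\|v\|^2_{H^\varepsilon_\gamma}$ differs from it only in that $\|v_2\|^2_{L^2(\Omega_2^\varepsilon)}$ is replaced by $\varepsilon^\gamma\|[v]\|^2_{L^2(\Gamma^\varepsilon)}$. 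The comparison therefore reduces to relating these two quantities, using the boundary identity $v_2=v_1-[v]$ on $\Gamma^\varepsilon$ and the $\varepsilon$-dependent scaling of standard inequalities on the reference cells $Y_1$ and $Y_2$.

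For the lower bound, I would first record two ingredients on the fixed cell. From a standard argument by contradiction on the Lipschitz domain $Y_2$, one gets $\|w\|^2_{L^2(Y_2)}\le C(\|\nabla w\|^2_{L^2(Y_2)}+\|w\|^2_{L^2(\Gamma)})$ for $w\in H^1(Y_2)$; the usual trace theorem yields $\|w\|^2_{L^2(\Gamma)}\le C\|w\|^2_{H^1(Y_1)}$ for $w\in H^1(Y_1)$. A change of variable $y=x/\varepsilon$ on each translated cell and summation over $k\in J_\varepsilon$ turns these into
$$\|v_2\|^2_{L^2(\Omega_2^\varepsilon)}\le C\bigl(\varepsilon^2\|\nabla v_2\|^2_{L^2(\Omega_2^\varepsilon)}+\varepsilon\|v_2\|^2_{L^2(\Gamma^\varepsilon)}\bigr),\qquad \|v_1\|^2_{L^2(\Gamma^\varepsilon)}\le C\bigl(\varepsilon^{-1}\|v_1\|^2_{L^2(\Omega_1^\varepsilon)}+\varepsilon\|\nabla v_1\|^2_{L^2(\Omega_1^\varepsilon)}\bigr).$$
Splitting $v_2=v_1-[v]$ on $\Gamma^\varepsilon$, combining the two displays, and invoking the Poincaré inequality \eqref{Poi} to replace $\|v_1\|^2_{L^2(\Omega_1^\varepsilon)}$ by $\|\nabla v_1\|^2_{L^2(\Omega_1^\varepsilon)}$, produces
$$\|v_2\|^2_{L^2(\Omega_2^\varepsilon)}\le C\bigl(\|\nabla v_1\|^2_{L^2(\Omega_1^\varepsilon)}+\|\nabla v_2\|^2_{L^2(\Omega_2^\varepsilon)}+\varepsilon\|[v]\|^2_{L^2(\Gamma^\varepsilon)}\bigr).$$
Since $\gamma\le 1$ and $\varepsilon$ is small, $\varepsilon=\varepsilon^{1-\gamma}\varepsilon^\gamma\le C\varepsilon^\gamma$, so the right-hand side is dominated by $C\|v\|^2_{H^\varepsilon_\gamma}$, which yields the lower estimate with a constant $C_1$ independent of $\varepsilon$.

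For the upper bound, the only new contribution compared to $\|v\|^2_{V^\varepsilon\times H^1(\Omega_2^\varepsilon)}$ is the jump term $\varepsilon^\gamma\|[v]\|^2_{L^2(\Gamma^\varepsilon)}$. Bounding $\|[v]\|^2_{L^2(\Gamma^\varepsilon)}\le 2\bigl(\|v_1\|^2_{L^2(\Gamma^\varepsilon)}+\|v_2\|^2_{L^2(\Gamma^\varepsilon)}\bigr)$ and applying the scaled trace inequality on each component of $\Omega$ (the analogue holds on $\Omega_2^\varepsilon$ by the same cell-scaling) gives
$$\|[v]\|^2_{L^2(\Gamma^\varepsilon)}\le C\varepsilon^{-1}\bigl(\|v_1\|^2_{L^2(\Omega_1^\varepsilon)}+\|v_2\|^2_{L^2(\Omega_2^\varepsilon)}\bigr)+C\varepsilon\bigl(\|\nabla v_1\|^2_{L^2(\Omega_1^\varepsilon)}+\|\nabla v_2\|^2_{L^2(\Omega_2^\varepsilon)}\bigr)\le C\varepsilon^{-1}\|v\|^2_{V^\varepsilon\times H^1(\Omega_2^\varepsilon)},$$
after using \eqref{Poi} on the $v_1$-part. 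Multiplying by $\varepsilon^\gamma$ produces the factor $\varepsilon^{\gamma-1}$, and adding the trivial majorization of the remaining gradient terms gives the $C_2(1+\varepsilon^{\gamma-1})$ bound.

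With the equivalence in hand, the three a priori estimates for a bounded sequence $v^\varepsilon$ in $H^\varepsilon_\gamma$ follow immediately: the $H^1$-bounds on each component come from the lower inequality, while $\|[v^\varepsilon]\|_{L^2(\Gamma^\varepsilon)}\le C\varepsilon^{-\gamma/2}$ is read off the defining norm. The main delicate point, and essentially the only place where care is required, is getting the correct power of $\varepsilon$ in the two scaled inequalities and verifying that the constants produced are genuinely independent of $k\in J_\varepsilon$; this is standard because every scaled cell is obtained from the same reference pair $(Y_1,Y_2)$ by translation and dilation. The restriction $\gamma\le 1$ enters precisely at the step $\varepsilon\le C\varepsilon^\gamma$ used to absorb the jump into $\|v\|^2_{H^\varepsilon_\gamma}$ in the lower bound.
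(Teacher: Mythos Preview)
Your argument is correct: the cell-level Poincar\'e-with-trace inequality on $Y_2$ and the trace inequality on $Y_1$ rescale exactly as you write, the splitting $v_2=v_1-[v]$ on $\Gamma^\varepsilon$ together with \eqref{Poi} handles the lower bound, and the scaled trace estimates on both components give the upper bound with the right factor $\varepsilon^{\gamma-1}$. The three consequences are immediate.

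Note, however, that the paper does not give a proof of this proposition at all: it is quoted from \cite{DoJo}, \cite{DoLNTa}, \cite{Mo}, so there is no in-paper argument to compare against. What you have written is essentially the standard proof found in those references (in particular the cell-scaling argument is the one used in \cite{Mo} and \cite{DoLNTa}). One small cosmetic remark: the bound $\varepsilon\le C\varepsilon^{\gamma}$ requires $\varepsilon\le 1$ (or more generally $\varepsilon$ bounded), which is harmless here since $\varepsilon$ ranges over a sequence tending to zero, but it is worth stating explicitly rather than hiding it behind ``$\varepsilon$ is small''.
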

Let us define the following closed and convex subset of $H^\e_\g $:
\beq\label{keg}
\ds
K^\e_\g\doteq \{v \in H^\e_\g \ |\ [v]\geq 0 \hbox{ on } \gae\}.
\eeq
Hence, the weak formulation of problem \eqref{prob} is given by the variational inequality
\beq\label{pe}
\left\{\ba{llll}
\ds \text{Find } \ue \in K^\e_\g \text{ such that }\\[2mm]
\ds \int_{\o_1^\e} A^\e\n \uoe\n (v_1-\uoe)dx+\int_{\o_2^\e} A^\e\n \ute\n (v_2-\ute)dx +\e^\g\int_{\gae}h^\e[\ue]([v]-[\ue])d\sigma\\[5mm]
\ds \geq \int_{\o_1^\e} f(v_1-\uoe)dx+\int_{\o_2^\e} f(v_2-\ute)dx,\quad \forall v \in K^\e_\g.
\ea\right.
\eeq
Indeed, by multiplying the first equation in problem \eqref{prob} by $\ue$ and integrating by parts, one obtains
\beq
\ds \int_{\o_1^\e} A^\e\n \uoe\n\uoe\, dx+\int_{\o_2^\e} A^\e\n \ute\n\ute\, dx -\int_{\gae}(A^\e\n \ue)\cdot\nu_1^\e[\ue]d\sigma=\int_{\o_1^\e} f\uoe\, dx+\int_{\o_2^\e} f\ute\, dx.
\eeq
Thus, adding and subtracting $\e^\g\displaystyle\int_{\gae}h^\e[\ue]^2\,d\sigma$, we get
\beq
\ba{ll}
\ds \int_{\o_1^\e} A^\e\n \uoe\n\uoe\, dx +\int_{\o_2^\e} A^\e\n \ute\n\ute\, dx -\int_{\gae}((A^\e\n \ue)\cdot\nu_1^\e+\e^\g h^\e[\ue])[\ue]d\sigma+\e^\g\int_{\gae}h^\e[\ue]^2\,d\sigma\\[5mm]
\ds =\int_{\o_1^\e} f\uoe\, dx+\int_{\o_2^\e} f\ute\, dx,
\ea
\eeq
which leads to 
\beq\label{equa}
\ds \int_{\o_1^\e} A^\e\n \uoe\n\uoe\, dx +\int_{\o_2^\e} A^\e\n \ute\n\ute\, dx +\e^\g\int_{\gae}h^\e[\ue]^2\,d\sigma=\int_{\o_1^\e} f\uoe\, dx+\int_{\o_2^\e} f\ute\, dx,
\eeq
in view of the fifth condition in \eqref{prob}.\\
On the other hand, by multiplying the first equation in problem \eqref{prob} by a function $v\in K^\e_\g$, integrating by parts and taking into account the fourth condition in \eqref{prob}, one has
\beq\label{diseq}
\ba{lll}
\ds \int_{\o_1^\e} A^\e\n \uoe\n v_1\, dx + \int_{\o_2^\e} A^\e\n \ute\n v_2\, dx +\e^\g\int_{\gae}h^\e[\ue][v]\,d\sigma\\[5mm]
\ds =\int_\o fv\, dx+\int_{\gae}((A^\e\n \ue)\cdot\nu_1^\e+\e^\g h^\e[\ue])[v]\,d\sigma\geq \int_{\o_1^\e} f v_1\, dx+\int_{\o_2^\e} f v_2\, dx.
\ea
\eeq
Finally, the variational formulation \eqref{pe} is obtained by subtracting \eqref{equa} from \eqref{diseq}.

In Section \ref{secexuniq} we will prove that, for any fixed $\e$, problem \eqref{pe} admits a unique solution. 
\subsection{The main result}
The following theorem, which is the main result of this paper, shows that the asymptotic behaviour of the weak solution of problem \eqref{prob} varies according to the different values of the parameter $\gamma$.
\begin{theorem}\label{maintheo}
Under assumptions $(\textbf{A}1)-(\textbf{A}4)$, let $\ue=(\uoe,\ute)$ be the weak solution of problem \eqref{prob}.
\begin{itemize}
\item[$\blacktriangleright$] {\bf Case $\g<1$}\\
There exists $u_1\in H^1_0(\o)$ such that
\beq
\ds\widetilde{\ue_i}\ru \theta_i u_1 \quad \hbox{ weakly in } L^2(\o),\ i=1,2,
\eeq
where $u_1$ is the unique solution of problem
\beq\label{probhom}
\left\{\ba{llll}
\ds
-\operatorname{div}(A^0_\g(\n u_1))=f & \hbox{ in } \o, \\[2mm]
\ds
u_1 =0 & \hbox{ on } \partial \o.
\ea\right.
\eeq
\begin{itemize}
\item[$\bullet$] If $\g<-1$, $$A^0_\g(\n u_1)=A^0_\g\n u_1$$
where the constant matrix $A^0_\g$ is given by
\beq\label{A0g}
A^0_\g=A^1_\g+A^2_\g
\eeq
being
\beq
A^l_\g\doteq \left\{\theta_l\mathcal{M}_{Y_l}\left(a_{ij}-\sum_{k=1}^N a_{ik}\frac{\partial \overline{\chi}_j}{\partial y_k}\right)\right\}_{N\times N},\quad l=1,2
\eeq
with $\overline{\chi}=\left(\overline{\chi}_1,\ldots,\overline{\chi}_N\right)$, and $\overline{\chi}_j$ is the solution of the cell problem
\beq\label{cell2c}
\left\{\ba{llll}
\ds
-\operatorname{div}(A \n (\overline{\chi}_j-y_j))=0 \quad \hbox{ in } Y, \\[2mm]
\ds\overline{\chi}_j \quad Y-periodic, \quad \mathcal{M}_\G(\overline{\chi}_j)=0.
\ea\right.
\eeq
Moreover it holds
\beq
A^{\varepsilon}\widetilde{\nabla u^{\varepsilon}_l}\ru A^l_\g\nabla u_1\quad\hbox{ weakly in } (L^2(\o))^N,\quad l=1,2.
\eeq
\item[$\bullet$] If $\g=-1$,
\beq\label{ahom}
A^0_{-1}(\z)\doteq\sum_{i=1}^2\frac{1}{|Y|}\int_{Y_i}A(y)\left(\z+\n_y \widehat{\chi}_i(y,\z)\right)dy,\quad \z\in \R^N,
\eeq
with $({\widehat{\chi}}_1(\cdot,\z),{\widehat{\chi}}_2(\cdot,\z))$ solution of the following cell problem:
\beq\label{celltircase}
\left\{\ba{llll}
\ds \text{Find } ({\widehat{\chi}}_1 (\cdot,\z),{\widehat{\chi}}_2 (\cdot,\z)) \in \overline{W} \text{ such that }\\[2mm]
\ds \frac{1}{|Y|}\int_{Y_1} A(y)(\z+\n_y {\widehat{\chi}}_1 (y,\z))(\n_y z_1-\n_y {\widehat{\chi}}_1 (y,\z))dy\\[5mm]
\ds+\frac{1}{|Y|}\int_{Y_2} A(y)(\z+\n_y {\widehat{\chi}}_2 (y,\z))(\n_y z_2-\n_y {\widehat{\chi}}_2 (y,\z))dy\\[5mm]
\ds +\frac{1}{|Y|}\int_{\G}h(y)({\widehat{\chi}}_1 (y,\z)-{\widehat{\chi}}_2 (y,\z))((z_1-z_2)-({\widehat{\chi}}_1 (y,\z)-{\widehat{\chi}}_2 (y,\z)))\,dxd\sigma_y \geq 0,\\[5mm]
\ds \forall (z_1,z_2)\in H^1_{per}(Y_1)\times H^1(Y_2):\ z_1 \geq z_2 \hbox{ on }\G,
\ea\right.
\eeq
where 
\beq \label{Wbar}
\ds \overline{W}\doteq\{(z_1,z_2)\in H^1_{per}(Y_1)\times H^1(Y_2)|\mathcal{M}_\G(z)=0 \hbox{ and } z_1 \geq z_2 \hbox{ on }\G\}.
\eeq
Moreover it holds
\beq
A^{\varepsilon}\widetilde{\nabla u^{\varepsilon}_l}\ru \theta_l\mathcal{M}_{Y_l}\left(A(y)\left(\nabla u_1+\n_y \widehat{\chi}_l(y,\nabla u_1)\right)\right)\quad\hbox{ weakly in } (L^2(\o))^N,\quad l=1,2.
\eeq
\item[$\bullet$] If $\g\in ]-1,1[$, $$A^0_\g(\n u_1)=A^0_\g\n u_1$$
where the constant matrix $A^0_\g$ is given by
\beq\label{a0g}
A^0_\g\doteq \left\{\theta_1\mathcal{M}_{Y_1} \left(a_{i,j}-\sum_{k=1}^N a_{i,k}\frac{\partial \chi_j}{\partial y_k}\right)\right\}_{N\times N},
\eeq
with $\chi=(\chi_1, \ldots, \chi_N)$, being $\chi_j$ the solution of the cell problem
\beq\label{cell1c}
\left\{\ba{llll}
\ds
-\operatorname{div}(A(y) \n (\chi_j-y_j))=0 & \hbox{ in } Y_1, \\[2mm]
\ds (A(y) \n (\chi_j-y_j))\cdot \nu_1 =0 & \hbox{ on } \G,\\[2mm]
\ds\chi_j \quad Y-periodic, \qquad \mathcal{M}_{Y_1}(\chi_j)=0.
\ea\right.
\eeq
\end{itemize}
Moreover it holds
\beq\label{flusso}
\left\{
\begin{array}{ll}
A^{\varepsilon}\widetilde{\nabla u^{\varepsilon}_1}\ru A^0_\g\nabla u_1&\hbox{ weakly in } (L^2(\o))^N,\\
A^{\varepsilon}\widetilde{\nabla u^{\varepsilon}_2}\ru 0&\hbox{ weakly in } (L^2(\o))^N.
\end{array}
\right.
\eeq
\item[$\blacktriangleright$] {\bf Case $\g=1$}\\
There exist $(u_1,u_2)\in H^1_0(\o)\times L^2(\o)$ such that
\beq
\ds\widetilde{\ue_i}\ru \theta_i u_i \quad \hbox{ weakly in } L^2(\o),\ i=1,2,
\eeq
and $(u_1,u_2)$ is the unique solution of problem
\beq\label{obstacle}
\left\{\ba{lll}
\ds \hbox{Find }(u_1,u_2)\in K \hbox{ s.t. }\\[2mm]
\ds \int_{\o} A^0_1 \n u_1(\n \f_1-\n u_1)\, dx +\frac{\mathcal{M}_\G(h)}{|Y|}\int_{\o} (u_1-u_2)[(\f_1-\f_2)-(u_1-u_2)]\, dx \\[5mm]
\ds \geq \theta_1\int_\o f(\f_1-u_1)\, dx+\theta_2\int_\o f(\f_2-u_2)\, dx,\ \forall (\f_1,\f_2)\in K,
\ea\right.
\eeq
where $K\doteq\{(\f_1,\f_2)\in H^1_0(\o)\times L^2(\o) \,|\, \f_1\geq \f_2 \hbox{ in }\o\}$ and the constant matrix $A^0_1$ is as in \eqref{a0g}.\\
Moreover, it holds \eqref{flusso} with $\gamma=1$.
\end{itemize} 
\end{theorem}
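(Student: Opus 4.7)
The plan is to combine the periodic unfolding machinery recalled in Section~4 with the a priori bound of Section~3: writing $\|u^\varepsilon\|_{H^\varepsilon_\gamma}\le C$ and invoking Proposition~\ref{eqnorm} gives $\|u_i^\varepsilon\|_{H^1(\Omega_i^\varepsilon)}\le C$ and $\|[u^\varepsilon]\|_{L^2(\Gamma^\varepsilon)}\le C\varepsilon^{-\gamma/2}$. Standard unfolding compactness then produces, along a subsequence, $u_1\in H^1_0(\Omega)$, $u_2\in L^2(\Omega)$ and correctors $\widehat{u}_i\in L^2(\Omega;H^1_{per}(Y_i)/\mathbb{R})$ satisfying
\[
\mathcal{T}^\varepsilon_i(u_i^\varepsilon)\rightharpoonup u_i,\qquad \mathcal{T}^\varepsilon_i(\nabla u_i^\varepsilon)\rightharpoonup \nabla u_i+\nabla_y\widehat{u}_i
\]
in the appropriate $L^2$-spaces, together with a companion weak limit for the unfolded boundary jump whose precise form is dictated by the scale $\varepsilon^\gamma$. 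The rest of the proof proceeds by inserting well-chosen test functions in \eqref{pe}, passing to the limit in each bulk and interface integral, and checking uniqueness of the resulting problem by strict convexity of the energy associated to the limit variational (in)equality.

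\textbf{Cases $\gamma<-1$ and $\gamma\in(-1,1)$.} When $\gamma<-1$ we have $\|[u^\varepsilon]\|_{L^2(\Gamma^\varepsilon)}\to 0$, so the unfolded jump vanishes and the two phases are welded into a single $u_1$ sharing a common corrector on $Y$; test functions $v=u^\varepsilon\pm(\varphi,\varphi)$ with $\varphi\in\mathcal{D}(\Omega)$ are automatically admissible and, together with oscillating correctors built from \eqref{cell2c}, deliver \eqref{A0g}. For $\gamma\in(-1,1)$ the second phase decouples in the corrector but its macroscopic flux vanishes asymptotically, so $\widehat{u}_2\equiv 0$ and the cell problem degenerates to \eqref{cell1c} on $Y_1$ alone; admissible competitors are built from $\varphi\in H^1_0(\Omega)$ plus an $\varepsilon$-corrector in $\Omega_1^\varepsilon$ and a matching smooth extension into $\Omega_2^\varepsilon$, with a tiny positive shift enforcing $[v]\ge 0$, and a density argument produces \eqref{a0g} and \eqref{flusso}.

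\textbf{Critical case $\gamma=-1$.} The bound $\|[u^\varepsilon]\|_{L^2(\Gamma^\varepsilon)}\le C\varepsilon^{1/2}$ lets the unfolded jump converge weakly in $L^2(\Omega\times\Gamma)$ to the trace $\widehat{u}_1-\widehat{u}_2$, nonnegative because $[u^\varepsilon]\ge 0$; this is what forces the two-component structure and the constraint in \eqref{Wbar}. Admissible competitors take the form $v^\varepsilon_i=\varphi(x)+\varepsilon\widehat{w}_i(x,x/\varepsilon)$ with $(\widehat{w}_1,\widehat{w}_2)\in\overline{W}$, so that $[v^\varepsilon]\ge 0$ on $\Gamma^\varepsilon$. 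Linear bulk and boundary terms pass to the limit by weak convergence; the delicate point is the quadratic interface term $\varepsilon^{-1}\int_{\Gamma^\varepsilon}h^\varepsilon|[u^\varepsilon]|^2\,d\sigma$, which involves the product of two only weakly convergent sequences. As announced in the introduction we handle it via lower semicontinuity of the convex functional $z\mapsto\int_\Gamma h\,|z|^2\,d\sigma$ under unfolding-weak convergence, obtaining the correct lower bound and, after localising in $\zeta=\nabla u_1(x)$ by separation of variables, the cell variational inequality \eqref{celltircase}.

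\textbf{Supercritical case $\gamma=1$.} Now $\varepsilon\int_{\Gamma^\varepsilon}h^\varepsilon|[u^\varepsilon]|^2\,d\sigma$ being bounded translates via unfolding into the decoupling $\widetilde{u_i^\varepsilon}\rightharpoonup\theta_i u_i$ with two distinct macroscopic limits tied by the constraint $u_1\ge u_2$ a.e.\ in $\Omega$, explaining the obstacle set $K$. A smooth $(\varphi_1,\varphi_2)\in K$ is lifted to $v^\varepsilon\in K^\varepsilon_1$ by adding an oscillating corrector to $\varphi_1$ in $\Omega_1^\varepsilon$ and leaving $\varphi_2$ essentially unchanged in $\Omega_2^\varepsilon$, with a nonnegative adjustment ensuring $[v^\varepsilon]\ge 0$; density then removes the regularity assumption. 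The interface linear term produces the zero-order contribution $|Y|^{-1}\mathcal{M}_\Gamma(h)\int_\Omega(u_1-u_2)(\varphi_1-\varphi_2)\,dx$, the quadratic one is again controlled by lower semicontinuity, and the $\Omega_2^\varepsilon$-flux drops out, yielding \eqref{obstacle} and \eqref{flusso}. Throughout, the hardest step is the simultaneous construction of test functions satisfying the unilateral constraint on $\Gamma^\varepsilon$ whose limits sweep a sufficiently rich set of candidate competitors, combined with the lower-semicontinuity argument needed for the quadratic interface term where no strong compactness is available.
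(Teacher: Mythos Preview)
Your outline is essentially the paper's own strategy in Section~5: unfolding compactness from Theorem~\ref{teconv}, test functions of the form $u^\varepsilon\pm(\varphi,\varphi)$ or $\varphi+\varepsilon\,w\Psi^\varepsilon$ (with an $\varepsilon M$ shift to enforce $[v]\ge0$ when needed), lower semicontinuity for the quadratic terms in the critical cases $\gamma\in\{-1,1\}$, and density to recover general competitors. Two points to tighten: for $\gamma=-1$ the bulk quadratic terms $\int_{\Omega_i^\varepsilon}A^\varepsilon\nabla u_i^\varepsilon\cdot\nabla u_i^\varepsilon$ also require lower semicontinuity (not only the interface term), since the competitor replaces $u^\varepsilon$ rather than perturbing it; and for $\gamma\in(-1,1]$ the vanishing $\nabla_y\overline{u}_2=0$ is not automatic but is obtained in the paper by the separate test function $(\uoe,\ute-\varepsilon w\Psi_2^\varepsilon-\varepsilon M)$ with $\Psi_2\in C^1_c(\overline{Y}_2)$, which you should make explicit.
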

\begin{remark}\label{commento}
We explicitly observe that the most interesting cases are $\gamma=-1$ and $\gamma=1$. Actually, for $\gamma=-1$, the homogenized problem \eqref{probhom} is nonlinear. In particular, the associated operator is defined through the solution of a cell problem  consisting in a variational inequality presenting a jump on the interface $\G$ (cf. \eqref{ahom}-\eqref{celltircase}). Here, the positivity constraint of the jump on $\Ge$ gives rise to a positive jump on $\G$ in the cell problem.\\ 
When $\gamma=1$,  the Signorini-type interface condition at $\e$ level leads to an obstacle-type homogenized problem.\\
In the remaining cases the homogenized problems are linear and standard (see \cite{DoLNTa}, \cite{DoMo}, \cite{Mo}). Hence, the Signorini-type interface condition in the $\e$-problem does not affect the limit behaviour.  
\end{remark}
\vskip 1cm
\section{Existence and uniqueness result}\label{secexuniq}
We prove here the existence and the uniqueness of the solution of problem \eqref{pe},  for any fixed $\ep$, together with some uniform a priori estimates. 
\begin{theorem}\label{exun}
Under assumptions $(\textbf{A}1)-(\textbf{A}4)$,  for any fixed $\ep$, there exists a unique solution $u^\ep\in K^\e_\g$ of  problem \eqref{pe} satisfying 
\begin{equation}\label{estsol}
\|\ue\|_{H^\e_\g}\leq C,
\eeq
with $C$ positive constant independent of $\ep$.
\end{theorem}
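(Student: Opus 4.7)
The plan is to recognize \eqref{pe} as a standard elliptic variational inequality on the closed convex set $K^\e_\g\subset H^\e_\g$ and apply the Lions--Stampacchia theorem, and then derive the uniform bound by choosing a competitor in the inequality itself.

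First I would set, for $u,v\in H^\e_\g$,
\[
a^\e(u,v)\doteq\int_{\o_1^\e} A^\e\n u_1\n v_1\,dx+\int_{\o_2^\e} A^\e\n u_2\n v_2\,dx+\e^\g\int_{\gae}h^\e[u][v]\,d\sigma,\qquad L^\e(v)\doteq\int_\o f\,v\,dx,
\]
where in $L^\e$ I use the identification $v\sim\widetilde{v_1}+\widetilde{v_2}$. Continuity of $a^\e$ on $H^\e_\g\times H^\e_\g$ is immediate from $A\in\mathcal{M}(\alpha,\beta,Y)$, $h\in L^\infty(\G)$ and the very definition of the norm $\|\cdot\|_{H^\e_\g}$. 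Coercivity follows from the lower bound
\[
a^\e(v,v)\ \geq\ \alpha\bigl(\|\n v_1\|^2_{L^2(\o_1^\e)}+\|\n v_2\|^2_{L^2(\o_2^\e)}\bigr)+\e^\g h_0\|[v]\|^2_{L^2(\gae)}\ \geq\ \min(\alpha,h_0)\,\|v\|^2_{H^\e_\g},
\]
with a constant independent of $\e$. Continuity of $L^\e$ on $H^\e_\g$ follows from $f\in L^2(\o)$, the Poincar\'e inequality \eqref{Poi} and the left-hand inequality in Proposition \ref{eqnorm}, which yield $\|v\|_{L^2(\o)}\leq C\|v\|_{H^\e_\g}$ with $C$ independent of $\e$. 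Finally $K^\e_\g$ is nonempty (it contains $0$), convex, and closed in $H^\e_\g$ (closedness follows from continuity of the trace operators on the two components of $\gae$, hence of the jump $[\cdot]$, so the preimage of $\{[v]\geq 0\}$ is closed). The Lions--Stampacchia theorem then yields, for each fixed $\e$, a unique $\ue\in K^\e_\g$ solving \eqref{pe}.

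To obtain \eqref{estsol}, I would plug $v=0\in K^\e_\g$ into \eqref{pe}, giving
\[
a^\e(\ue,\ue)\ \leq\ \int_{\o_1^\e}f\uoe\,dx+\int_{\o_2^\e}f\ute\,dx\ \leq\ \|f\|_{L^2(\o)}\bigl(\|\uoe\|_{L^2(\o_1^\e)}+\|\ute\|_{L^2(\o_2^\e)}\bigr).
\]
The coercivity bound above controls the left-hand side by $\min(\alpha,h_0)\|\ue\|^2_{H^\e_\g}$; for the right-hand side, the $\e$-uniform estimate $C_1\|\ue\|^2_{V^\e\times H^1(\o_2^\e)}\leq\|\ue\|^2_{H^\e_\g}$ from Proposition \ref{eqnorm} together with \eqref{Poi} gives $\|\uoe\|_{L^2(\o_1^\e)}+\|\ute\|_{L^2(\o_2^\e)}\leq C\|\ue\|_{H^\e_\g}$. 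Combining these two inequalities and dividing by $\|\ue\|_{H^\e_\g}$ yields \eqref{estsol} with $C$ depending only on $\alpha,\beta,h_0,C_P$ and $\|f\|_{L^2(\o)}$, but not on $\e$.

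The only subtle point is checking that the constants produced by Proposition \ref{eqnorm} that I use (the left inequality, controlling $\|v\|_{V^\e\times H^1(\o_2^\e)}$ by $\|v\|_{H^\e_\g}$) are genuinely $\e$-independent in the range $\g\leq 1$; this is exactly what (\textbf{A}2) ensures, so the whole scheme goes through. Everything else is bookkeeping within the Hilbert space framework.
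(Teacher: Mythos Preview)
Your proof is correct and follows essentially the same route as the paper: recognize \eqref{pe} as a coercive variational inequality on the closed convex set $K^\e_\g$, invoke a standard existence/uniqueness theorem, and obtain the uniform bound by testing with $v=0$ and combining the coercivity of $a^\e$ with Proposition~\ref{eqnorm} and the Poincar\'e inequality \eqref{Poi}. The only cosmetic difference is that the paper phrases the existence step via the pseudo-monotone operator framework of Lions (\cite[Theorems~8.2--8.3]{lions}), whereas you invoke the Lions--Stampacchia theorem directly; since $a^\e$ is bilinear, continuous and coercive, your choice is perfectly adequate (and arguably more to the point).
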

\begin{proof}
For any fixed $\ep$, we want to apply \cite[Proposition $2.5$ (pag. 179), Theorems $8.2-8.3$ (pag. $248$)]{lions}. To this aim, we consider the operator
$$ \mathcal{A}_\e: H^\e_\g \rw (H^\e_\g)'$$
such that
\beq\label{Aep}
\ds \langle\mathcal{A}_\e(u),v\rangle_{(H^\e_\g)',H^\e_\g}\doteq \int_{\o_1^\e} A^\e\n u_1\n v_1 dx+\int_{\o_2^\e} A^\e\n u_2\n v_2 dx +\e^\g\int_{\gae}h^\e[u][v]d\sigma,
\eeq
where $(H^\e_\g)'$ denotes the dual space of $H^\e_\g$.\\
Hence, problem \eqref{pe} can be rewritten as 
\beq\label{pe*}
\left\{\ba{lll}
\ds \text{Find } \ue \in K^\e_\g \text{ such that }\\[2mm]
\ds \langle\mathcal{A}_\e(\ue),v-\ue\rangle_{(H^\e_\g)',H^\e_\g}\geq \int_{\o_1^\e} f(v_1-\uoe)dx+\int_{\o_2^\e} f(v_2-\ute)dx,\quad \forall v \in K^\e_\g.
\ea\right.
\eeq

The hemicontinuity, the equi-boundedness  and the strict monotonicity of $\mathcal{A}_\e$ are immediate consequences of $(\textbf{A}1)$ and $(\textbf{A}4)$. Therefore, by \cite[Proposition $2.5$ (pag. 179)]{lions} one has that $\mathcal{A}_\e$ is pseudo-monotone.\\
Moreover, it is easy to check that $\mathcal{A}_\e$ is also equicoercive. Hence, the existence and the uniqueness of the solution follow from \cite[Theorems $8.2-8.3$]{lions}.

Finally, we prove the a priori estimates by choosing $v=0$ as test function in \eqref{pe*}. One gets
$$\langle\mathcal{A}_\e(\ue),-\ue\rangle_{(H^\e_\g)',H^\e_\g}\geq \int_{\o_1^\e} f(-\uoe)dx+\int_{\o_2^\e} f(-\ute)dx.$$
Using again $(\textbf{A}1)$ and $(\textbf{A}4)$ and applying the Holder inequality, we have
\beq
\ds  \alpha \|(\n\uoe,\n\ute)\|^2_{L^2(\o_1^\e)\times L^2(\o_2^\e)} + \e^\g h_0\|[\ue]\|^2_{L^2(\gae)}\leq \langle\mathcal{A}_\e(\ue),\ue\rangle_{(H^\e_\g)',H^\e_\g} \leq (\|\uoe\|_{L^2(\o_1^\e)} +\|\ute\|_{L^2(\o_2^\e)})\|f\|_{\lto}.
\eeq
In view of \eqref{Poi} and Proposition \ref{eqnorm}, the above inequality leads to
$$
\displaystyle
\alpha \|(\n\uoe,\n\ute)\|^2_{L^2(\o_1^\e)\times L^2(\o_2^\e)} + \e^\g h_0\|[\ue]\|^2_{L^2(\gae)} \leq C(C_p, C_1) \|\ue\|_{H^\e_\g},
$$
with $C$ positive constant independent of $\e$. This gives the result.
\end{proof}
\section{The periodic unfolding method}\label{method}
In this section we recall the definitions and the properties of the periodic unfolding operators for two-component domains introduced by P. Donato, K.H. Le Nguyen and R. Tardieu in \cite{DoLNTa}. Throughout the paper, we use the same notation of \cite{donngu}.

Let $z \in \R^N$, we denote by $[z]_Y$ its integer part such that $z-[z]_Y$ belongs to $Y$ and set $\{z\}_Y\doteq z-[z]_Y$. Then, for every positive $\e$,
$$x=\e\left(\left[\frac{x}{\e}\right]_Y + \left\{\frac{x}{\e}\right\}_Y\right) \ \ \forall x \in \R^N.$$
We introduce the following sets:
\begin{equation*}
\widehat{J}_\e\doteq\{k\in\Z^N |\ \e Y^k\subset \o\},\ \ \widehat{\o}_\e\doteq\hbox{interior}\left\{\bigcup_{k\in \widehat{J}_\e} \e (k_l +\overline{Y})\right\},\ \ \Lambda_\e\doteq\o\setminus \widehat{\o}_\e,
\end{equation*}
and
\begin{equation*}
\widehat{\o}_i^\e\doteq\bigcup_{k\in \widehat{J}_\e} \e Y^k_i, \qquad \Lambda_i^\e\doteq\o_i^\e\setminus \widehat{\o}_i^\e, \quad i=1,2, \qquad \widehat{\G}^\e\doteq\partial \widehat{\o}_2^\e.
\end{equation*}
\begin{definition}
For any Lebesgue-measurable function $\phi$ on $\o^\e_i$, $i=1,2$, we set
\begin{equation}\label{uo}
\te_i(\phi)(x,y)\doteq\begin{cases}
\phi\left(\e\left[\frac{x}{\e}\right]_Y + \e y\right) & \hbox{ a.e. for } (x,y) \in \widehat{\o}_\e\times Y_i,\\
0 & \hbox{ a.e. for } (x,y) \in \Lambda_\e \times Y_i.
\end{cases}
\end{equation}
\end{definition}
We observe that $\te_1$ is the unfolding operator defined in \cite{d} for perforated domains, while $\te_2$ is the one introduced in \cite{DoLNTa} for the inclusions. If $\phi$ is defined in $\o$, we simply write $\te_i(\phi)$ instead of $\te_i(\phi_{|\o_i^\e})$, $i=1,2$. 

%\rd LA USIAMO? ALTRIMENTI ELIMINA E METTI 'DEFINITIONS OF TWO OPERATORS' ALL'INIZIO DEL PARAGRAFO\bk
%\begin{definition} For any Lebesgue-measurable function $\phi$ on $\o$, the unfolding operator for two-component domain $\te(\phi)$ is defined as follows:
%\begin{equation*}
%\te(\phi)(x,y)\doteq\begin{cases}
%\te_1(\phi) & \hbox{ in } \o\times Y_1,\\
%\te_2(\phi) & \hbox{ in } \o \times Y_2.
%\end{cases}
%\end{equation*}
%\end{definition}

We now recall the main properties of the unfolding operators.
\begin{proposition}[\cite{d}\cite{DoLNTa}]\label{pu}
Let $p \in [1,+\infty[$ and $i=1,2$.
\begin{enumerate}
\item $\te_i$ is a linear and continuous operator from $L^p(\o^\e_i)$ to $L^p(\o\times Y_i)$.
\item $\te_i(\phi\psi)=\te_i(\phi)\te_i(\psi)$ for every $\phi, \psi \in L^p(\o^\e_i)$.
\item Let $\phi \in L^p(Y_i)$ be a $Y$-periodic function and set $\phi_\e(x)=\phi(\frac{x}{\e})$.\\
Then, $$\te_i(\phi_\e)(x,y)=\begin{cases}
\phi(y) & \hbox{ a.e. for } (x,y) \in \widehat{\o}_\e\times Y_i,\\
0 & \hbox{ a.e. for } (x,y) \in \Lambda_\e \times Y_i,
\end{cases}$$ and
$$\te_i(\phi_\e)\rw\phi \quad \hbox{ strongly in }L^p(\o \times Y_i).$$
\item For all $\phi \in L^1(\o^\e_i)$, one has
$$\frac{1}{|Y|}\int_{\o\times Y_i} \te_i(\phi)(x,y) dxdy=\int_{\widehat{\o}^\e_i} \phi(x) dx=\int_{\o^\e_i} \phi(x) dx-\int_{\Lambda^\e_i} \phi(x) dx.$$
\item $\|\te_i(\phi)\|_{L^p(\o \times Y_i)}\leq |Y|^{\frac{1}{p}} \|\phi\|_{L^p(\o^\e_i)}$ for every $\phi \in L^p(\o^\e_i)$.
\item For $\phi \in L^p(\o)$, \ $\te_i(\phi) \rw \phi \quad \hbox{strongly in }L^p(\o \times Y_i)$.
\item Let $\{\phi_\e\}$ be a sequence in $L^p(\o)$ such that $\phi_\e \rw \phi \ \hbox{strongly in }L^p(\o)$. Then
$$\te_i(\phi_\e) \rw \phi \quad \hbox{strongly in }L^p(\o\times Y_i).$$
\item Let $\{\phi_\e\}$ be a sequence in $L^p(\o_i^\e)$ such that $\|\phi_\e\|_{L^p(\o_i^\e)}\leq C$.\\ If $\te_i(\phi_\e)\ru \widehat{\phi} \hbox{ weakly in }L^p(\o\times Y_i)$, then
$$\widetilde{\phi_\e} \ru \theta_i\mathcal{M}_{Y_i}(\widehat{\phi}) \quad \hbox{weakly in }L^p(\o).$$
\item If $\phi \in W^{1,p}(\o_i^\e)$, then $\n_y[\te_i(\phi)]=\e\te_i(\n \phi)$ and $\te_i(\phi) \in L^p(\o, W^{1,p}(Y_i))$.
\end{enumerate}
\end{proposition}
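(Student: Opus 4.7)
The strategy is to deduce all nine properties from the cell-wise change of variable underlying definition \eqref{uo}, together with density and approximation arguments. I would first establish the integral identity (4), which is the foundational step. Writing $\widehat{\o}^\e_i=\bigcup_{k\in\widehat{J}_\e}\e Y^k_i$, on each cell $\e Y^k$ I apply the affine change of variable $y\mapsto \e(k_l+y)$, a bijection from $Y_i$ onto $\e Y^k_i$ with Jacobian $\e^N$. Since $\te_i(\phi)(x,y)=\phi(\e k_l+\e y)$ is constant in $x$ on $\e Y^k$ and $|\e Y^k|=\e^N|Y|$, Fubini gives
\[
\frac{1}{|Y|}\int_{\e Y^k\times Y_i}\te_i(\phi)(x,y)\,dx\,dy=\int_{\e Y^k_i}\phi(x)\,dx,
\]
and summation over $k\in\widehat{J}_\e$, together with $\te_i(\phi)\equiv 0$ on $\Lambda_\e\times Y_i$, yields (4). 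Linearity in (1), the multiplicative property (2), the explicit form in (3), and the gradient formula (9) (via the chain rule applied to $\te_i(\phi)(x,y)=\phi(\e[x/\e]_Y+\e y)$) are immediate consequences of \eqref{uo}. Inserting $|\phi|^p$ into (4) produces $\|\te_i(\phi)\|^p_{L^p(\o\times Y_i)}=|Y|\int_{\widehat{\o}^\e_i}|\phi|^p dx\leq |Y|\|\phi\|^p_{L^p(\o^\e_i)}$, which is (5) and supplies the continuity in (1).

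For the strong convergence statements in (3), (6) and (7), I would proceed by density. If $\phi\in C(\overline{\o})$, the bound $|\e[x/\e]_Y+\e y-x|\leq \e\max_i l_i$ and uniform continuity imply $\te_i(\phi)\to \phi$ uniformly on $\widehat{\o}_\e\times Y_i$; since $|\Lambda_\e|\to 0$ and $\phi$ is bounded, the zero extension on $\Lambda_\e\times Y_i$ contributes a vanishing $L^p$ term, giving (6) for continuous $\phi$. The general case $\phi\in L^p(\o)$ then follows by a standard three-$\varepsilon$ argument exploiting the density of $C(\overline{\o})$ in $L^p(\o)$ and applying the uniform bound (5) to the approximation error. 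The second statement in (3) is (6) combined with the pointwise representation of $\te_i(\phi_\e)$. Property (7) is obtained from the decomposition $\te_i(\phi_\e)-\phi=\te_i(\phi_\e-\phi)+(\te_i(\phi)-\phi)$, bounding the first summand by (5) and invoking (6) for the second.

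The core step is (8). For an arbitrary test function $\psi\in L^{p'}(\o)$ I split
\[
\int_\o\widetilde{\phi_\e}\,\psi\,dx=\int_{\widehat{\o}^\e_i}\phi_\e\psi\,dx+\int_{\Lambda^\e_i}\phi_\e\psi\,dx,
\]
apply (4) to rewrite the first integral as $\frac{1}{|Y|}\int_{\o\times Y_i}\te_i(\phi_\e)\te_i(\psi)\,dxdy$, and pass to the limit via the weak-strong pairing of $\te_i(\phi_\e)\ru\widehat{\phi}$ in $L^p(\o\times Y_i)$ against $\te_i(\psi)\to\psi$ in $L^{p'}(\o\times Y_i)$ (the latter from (6)); this yields $\frac{1}{|Y|}\int_{\o\times Y_i}\widehat{\phi}(x,y)\psi(x)\,dxdy=\theta_i\int_\o\mathcal{M}_{Y_i}(\widehat{\phi})(x)\psi(x)\,dx$. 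The residual over $\Lambda^\e_i$ vanishes by H\"older's inequality, the uniform bound $\|\phi_\e\|_{L^p(\o^\e_i)}\leq C$, and $|\Lambda^\e_i|\to 0$. The main technical obstacle throughout is precisely this control of the boundary strip $\Lambda^\e$: in every convergence statement one must verify that the contribution from cells meeting $\partial\o$, on which $\te_i$ is defined to vanish, is asymptotically negligible, which in the general $L^p$-case (6) forces one to rely on density of continuous functions rather than pointwise estimates.
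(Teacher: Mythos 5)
The paper gives no proof of this proposition at all---it is recalled verbatim from \cite{d} and \cite{DoLNTa}---and your argument correctly reconstructs the standard proofs from those references: the exact integration formula (4) by the cell-wise change of variables $x=\e(k_l+y)$, properties (1)--(5) and (9) as direct consequences of the definition, the strong convergences in (3), (6), (7) via density of $C(\overline{\o})$ in $L^p(\o)$ combined with the uniform bound (5), and (8) by unfolding the test function and pairing weak against strong convergence, with the boundary layer $\Lambda^\e_i$ controlled by H\"older and $|\Lambda^\e_i|\to 0$. The only point you gloss over is the endpoint $p=1$ in (8), where $\te_i(\psi)\to\psi$ fails in $L^{p'}=L^{\infty}$ and one must instead write $\int\te_i(\phi_\e)\te_i(\psi)=\int\te_i(\phi_\e)\psi+\int\te_i(\phi_\e)\left(\te_i(\psi)-\psi\right)$, killing the second term via the uniform integrability (Dunford--Pettis) of the weakly convergent sequence and the convergence in measure of $\te_i(\psi)-\psi$; this is a routine fix and irrelevant to the case $p=2$ actually used in the paper.
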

We give below the main propositions concerning the jump on the interface and some convergence results.\\
Arguing as in the proof of Lemma 2.14 of \cite{DoLNTa}, we can easily obtain the following result:
\begin{lemma}
Let $u^\e \in H^\e_\g$ and $h$ satisfy (\textbf{A$4$}). It holds
\beq\label{brd*}
\e \int_{\Ge}h^\e(\ue_1-\ue_2)^2d\sigma_x\geq \frac{1}{|Y|}\int_{\o \times \G} h(y)(\te_1(\ue_1)-\te_2(\ue_2))^2dxd\sigma_y.
\eeq
\end{lemma}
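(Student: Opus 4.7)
The plan is to reduce the inequality to an exact boundary-unfolding identity on the interior part $\widehat{\Gamma}^\e$ of the interface, combined with a positivity argument on the boundary layer $\Gamma^\e\setminus\widehat{\Gamma}^\e$. Since $\widehat{J}_\e\subset J_\e$, one has $\widehat{\Gamma}^\e\subset\Gamma^\e$, and by assumption (\textbf{A$4$}) the integrand $h^\e(\ue_1-\ue_2)^2$ is pointwise nonnegative on $\Gamma^\e$. Discarding the boundary-layer contribution therefore yields
\begin{equation*}
\e\int_{\Gamma^\e}h^\e(\ue_1-\ue_2)^2\,d\sigma_x \;\geq\; \e\int_{\widehat{\Gamma}^\e}h^\e(\ue_1-\ue_2)^2\,d\sigma_x.
\end{equation*}

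The next step, which is the heart of the argument, is to establish the exact identity
\begin{equation*}
\e\int_{\widehat{\Gamma}^\e}h^\e(\ue_1-\ue_2)^2\,d\sigma_x \;=\; \frac{1}{|Y|}\int_{\Omega\times\Gamma}h(y)(\te_1(\ue_1)-\te_2(\ue_2))^2\,dx\,d\sigma_y.
\end{equation*}
Writing $\widehat{\Gamma}^\e=\bigcup_{k\in\widehat{J}_\e}\partial(\e Y_2^k)$ and applying on each cell the change of variables $x=\e k_l+\e y$ with $y\in\Gamma$, the $(N-1)$-dimensional surface element produces a factor $\e^{N-1}$, which combines with the prefactor $\e$ to give $\e^N=|\e Y|/|Y|$. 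Summing over $k\in\widehat{J}_\e$ recognises the resulting Riemann-type sum as $|Y|^{-1}\int_{\widehat{\Omega}_\e\times\Gamma}$, and extension by zero on $\Lambda_\e\times\Gamma$ produces the integral over the full $\Omega\times\Gamma$.

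For the identification of the integrand, the $Y$-periodicity of $h$ gives $h^\e(\e k_l+\e y)=h(y)$ (in the spirit of item (3) of Proposition \ref{pu}), while the definition \eqref{uo} makes the trace of $\te_i(\ue_i)$ on $\Omega\times\Gamma$ coincide with $\ue_i(\e[x/\e]_Y+\e y)$, i.e.\ with the boundary unfolding of the trace $\ue_i|_{\Gamma^\e}$; continuity of the $H^1$ trace renders this identification rigorous. Combining the displayed equality with the preceding inequality yields \eqref{brd*}. The only delicate point, rather than a genuine obstacle, is this trace identification on $\Gamma$; the remainder is a direct change of variables, mirroring the argument of \cite[Lemma 2.14]{DoLNTa} with the weight $h$ inserted.
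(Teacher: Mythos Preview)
Your proof is correct and follows essentially the same route as the paper, which simply invokes the argument of \cite[Lemma~2.14]{DoLNTa}: drop the nonnegative contribution on $\Gamma^\e\setminus\widehat{\Gamma}^\e$ (using $h>0$ from (\textbf{A$4$})), then recover the exact cell-by-cell change-of-variables identity on $\widehat{\Gamma}^\e$, with the factor $\e\cdot\e^{N-1}=|Y|^{-1}|\e Y^k|$ producing the $|Y|^{-1}$ normalisation. The trace identification you flag is exactly what the paper remarks after Lemma~\ref{phibordo} (``the traces are well defined in view of Proposition~\ref{pu}$_9$''), so nothing is missing.
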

Moreover, one has:
\begin{lemma}\cite{DoLNTa}\label{phibordo}
Let $u^\e \in H^\e_\g$ and $h$ satisfy (\textbf{A$4$}). If $\phi\in \d(\o)$, then for $\e$ small enough one has
\beq\label{brd}
\e \int_{\Ge}h^\e(\ue_1-\ue_2)\phi d\sigma_x=\frac{1}{|Y|}\int_{\o \times \G} h(y)(\te_1(\ue_1)-\te_2(\ue_2))\te_1(\phi)dxd\sigma_y.
\eeq
\end{lemma}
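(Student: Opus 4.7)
The plan is to exploit the compact support of $\phi$ in order to reduce the surface integral on $\Gamma^\e$ to a sum over those cells $\e Y^k$ that lie entirely inside $\Omega$, and then to evaluate each term by a straightforward change of variable on the unit cell.

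First I would exploit that $\phi\in\mathcal{D}(\Omega)$, so $\mathrm{supp}(\phi)$ is a compact subset of $\Omega$. Since $\Lambda_\e=\Omega\setminus\widehat\Omega_\e$ is contained in an $\e$-tubular neighbourhood of $\partial\Omega$ (of width at most $\e\max_i l_i$), for $\e$ small enough one has $\mathrm{supp}(\phi)\subset \widehat\Omega_\e$, and hence $\phi\equiv 0$ on $\G^\e\setminus\widehat\G^\e$. Consequently
\[
\e\int_{\Ge}h^\e(u^\e_1-u^\e_2)\phi\, d\sigma_x
= \e\sum_{k\in\widehat J_\e}\int_{\e k_l+\e\Gamma}h(x/\e)(u^\e_1-u^\e_2)(x)\phi(x)\,d\sigma_x.
\]

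Next, on each cell I would apply the change of variable $x=\e k_l+\e y$ with $y\in\Gamma$. This gives $d\sigma_x=\e^{N-1}d\sigma_y$, and by $Y$-periodicity $h(x/\e)=h(k_l+y)=h(y)$. Recalling definition \eqref{uo}, for every $x\in\e Y^k$ and $y\in\Gamma$ one has $\mathcal{T}^\e_i(\psi)(x,y)=\psi(\e k_l+\e y)$; in particular the integrand on $\Gamma$ does not depend on the macroscopic variable $x$ inside $\e Y^k$. Therefore, multiplying and dividing by $|\e Y^k|=\e^N|Y|$,
\[
\e^N\int_{\Gamma}h(y)(u^\e_1-u^\e_2)(\e k_l+\e y)\phi(\e k_l+\e y)\,d\sigma_y
= \frac{1}{|Y|}\int_{\e Y^k\times\Gamma}h(y)\bigl(\mathcal{T}^\e_1(u^\e_1)-\mathcal{T}^\e_2(u^\e_2)\bigr)\mathcal{T}^\e_1(\phi)\,dx\,d\sigma_y.
\]
Summing over $k\in\widehat J_\e$ the cells reassemble as $\widehat\Omega_\e$, so the right-hand side equals $\frac{1}{|Y|}\int_{\widehat\Omega_\e\times\Gamma}h(y)(\mathcal{T}^\e_1(u^\e_1)-\mathcal{T}^\e_2(u^\e_2))\mathcal{T}^\e_1(\phi)\,dx\,d\sigma_y$. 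Since the unfolding operators vanish on $\Lambda_\e\times Y_i$ by definition, this integral coincides with the one on $\Omega\times\Gamma$, which yields \eqref{brd}.

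The only delicate point is the reduction from $\Gamma^\e$ to $\widehat\Gamma^\e$: without the compact support of $\phi$, the boundary pieces of cut cells meeting $\partial\Omega$ would produce a non-trivial remainder, which is precisely why an analogous exact identity cannot be written for general $\phi\in L^2$ and why Lemma \ref{brd*} only gives the one-sided inequality \eqref{brd*}. Every other step is a direct consequence of $Y$-periodicity of $h$ and of the formulas in Proposition \ref{pu}.
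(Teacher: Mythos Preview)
The paper does not actually supply a proof of this lemma: it is quoted verbatim from \cite{DoLNTa} (Lemma~2.14 there), so there is no ``paper's own proof'' to compare against. Your argument is correct and is precisely the standard one: use the compact support of $\phi$ to discard the cells in $J_\e\setminus\widehat J_\e$ for $\e$ small, rescale each remaining interface piece $\e k_l+\e\Gamma$ to $\Gamma$ via $x=\e k_l+\e y$, invoke $Y$-periodicity of $h$, recognise the resulting integrands as the unfolded quantities (constant in $x$ on each $\e Y^k$), and reassemble using that $\mathcal T^\e_i$ vanishes on $\Lambda_\e\times Y_i$. This is exactly the computation carried out in the cited reference, so your proof is both correct and faithful to the original source.

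One small remark: in your closing paragraph you refer to ``Lemma~\ref{brd*}'', but \texttt{brd*} labels the equation, not the lemma; write ``the preceding lemma'' or ``inequality~\eqref{brd*}'' instead.
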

We note that in the right hand sides of \eqref{brd*} and \eqref{brd}, the traces are well defined in view of Proposition \ref{pu}$_9$.
\begin{theorem}\cite{up5}\cite{d}\cite{donngu}\cite{DoLNTa}\label{teconv}
Let $\g\in \R$ and $\ue$ be a bounded sequence in $H^\e_\g$. Then,
\begin{equation}\label{tediff}
\begin{array}{lll}
\displaystyle
\|\te_1(\n \ue_1)\|_{L^2(\o \times Y_1)}\leq C,\\[1mm]
\displaystyle
\|\te_2(\n \ue_2)\|_{L^2(\o \times Y_2)}\leq C,\\[1mm]
\displaystyle
\|\te_1(\ue_1)-\te_2(\ue_2)\|_{L^2(\o \times \G)}\leq C\e^{\frac{1-\g}{2}},
\end{array}
\end{equation}
and there exist a subsequence (still denoted by $\e$), $u_1\in H^1_0(\o)$ and $\widehat{u}_1\in L^2(\o, H^1_{per}(Y_1))$ with $\mathcal{M}_{\G}(\widehat{u}_1)=0$ a.e. in $\o$ such that
\begin{equation}\label{te1}
\left\{\begin{array}{ll}
\displaystyle
\te_1(\ue_1)\rw u_1 & \hbox{ strongly in }L^2(\o, H^1(Y_1)),\\[1mm]
\displaystyle
\te_1(\n\ue_1)\ru \n u_1 +\n_y \widehat{u}_1 & \hbox{ weakly in }L^2(\o\times Y_1).
\end{array}\right.
\end{equation}
Moreover, if $\g\leq 1$, there exist a subsequence (still denoted by $\e$), $u_2\in L^2(\o)$ and $\overline{u}_2\in L^2(\o, H^1(Y_2))$ with $\mathcal{M}_{\G}(\overline{u}_2)=0$ a.e. in $\o$ such that
\begin{equation}\label{te2}
\left\{\begin{array}{ll}
\displaystyle
\te_2(\ue_2)\ru u_2 & \hbox{ weakly in }L^2(\o, H^1(Y_2)),\\[1mm]
\displaystyle
\te_2(\n\ue_2)\ru \n_y \overline{u}_2 & \hbox{ weakly in }L^2(\o\times Y_2).
\end{array}\right.
\end{equation}
Furthermore, if $\g <1$, then $$u_1=u_2.$$
In addition, for $\g\leq-1$, there exists $\xi_\G\in L^2(\Omega)$ such that, set 
\beq\label{u2hat}
\widehat{u}_2\doteq\overline{u}_2-y_\G\n u_1-\xi_\G\in L^2(\o,H^1(Y_2))
\eeq
with $y_\G\doteq y-\mathcal{M}_\G(y)$, one has:
\begin{itemize}
\item if $\g<-1$, 
\beq\label{u1hat}
\widehat{u}_1=\widehat{u}_2+\xi_\G \quad \hbox{ on }\Omega \times \G;
\eeq
\item if $\g=-1$, $$\ds\frac{\toe(\uoe)-\tte(\ute)}{\e}\ru \widehat{u}_1-\widehat{u}_2 \quad \hbox{ weakly in }L^2(\Omega \times \G).$$
\end{itemize} 
\end{theorem}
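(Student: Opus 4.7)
The plan is to combine three ingredients: (i) the a priori bounds from Proposition~\ref{eqnorm}, (ii) the algebraic/analytic properties of $\te_1,\te_2$ collected in Proposition~\ref{pu}, and (iii) the two-scale convergence results for perforated/two-component domains established in \cite{d,DoLNTa,donngu,up5}. The novel content to check is the jump estimate and the identifications at the interface in the scaling regimes $\g\le -1$.

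\textbf{Step 1 (norm estimates).} First I would derive the three inequalities in \eqref{tediff}. From Proposition~\ref{pu}$_5$ and Proposition~\ref{eqnorm}, $\|\te_i(\n \ue_i)\|_{L^2(\o\times Y_i)}\le |Y|^{1/2}\|\n \ue_i\|_{L^2(\o^\e_i)}\le C$. For the jump, applying \eqref{brd*} with the lower bound $h(y)\ge h_0>0$ yields
\[
\frac{h_0}{|Y|}\int_{\o\times\G}(\te_1(\ue_1)-\te_2(\ue_2))^2\,dx\,d\sigma_y\le\|h\|_\infty\e\int_{\Ge}[\ue]^2\,d\sigma_x\le C\e\cdot\e^{-\g},
\]
where the last bound again comes from Proposition~\ref{eqnorm}. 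Taking square roots gives the exponent $(1-\g)/2$.

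\textbf{Step 2 (convergences for component 1).} Since $\ue_1\in V^\e$ is uniformly bounded in $H^1(\o^\e_1)$ and vanishes on $\partial\o$, I would invoke the periodic unfolding theorem in perforated domains of \cite{d,DoLNTa} to obtain \eqref{te1}: the macroscopic limit $u_1$ lies in $H^1_0(\o)$ and a corrector $\widehat u_1\in L^2(\o,H^1_{per}(Y_1))$ with $\mathcal{M}_\G(\widehat u_1)=0$ appears in the gradient identification. Up to a subsequence all convergences hold simultaneously.

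\textbf{Step 3 (convergences for component 2 when $\g\le 1$).} Since Proposition~\ref{eqnorm} gives $\|\ue_2\|_{H^1(\o^\e_2)}\le C$ whenever $\g\le 1$, the unfolding theorem for inclusions of \cite{DoLNTa} yields \eqref{te2}: $\te_2(\ue_2)\ru u_2$ weakly in $L^2(\o,H^1(Y_2))$ (so $u_2$ is independent of $y$), and $\te_2(\n\ue_2)\ru \n_y\overline u_2$ for some $\overline u_2\in L^2(\o,H^1(Y_2))$ with $\mathcal{M}_\G(\overline u_2)=0$ (fixing this normalization by subtracting its trace mean).

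\textbf{Step 4 (identification $u_1=u_2$ for $\g<1$).} Using Proposition~\ref{pu}$_9$ together with a standard continuous-trace argument, the convergences of $\te_i(\ue_i)$ in $L^2(\o,H^1(Y_i))$ pass to the interface: $\te_1(\ue_1)|_{\o\times\G}\rw u_1$ and $\te_2(\ue_2)|_{\o\times\G}\ru u_2$ in $L^2(\o\times\G)$. The third bound in \eqref{tediff} with $\g<1$ gives that $\te_1(\ue_1)-\te_2(\ue_2)\to 0$ strongly in $L^2(\o\times\G)$, so $u_1=u_2$ on $\o\times\G$. Since both sides are $y$-independent and $\G$ has positive $(N-1)$-measure, this forces $u_1=u_2$ a.e.\ in $\o$.

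\textbf{Step 5 (the delicate regime $\g\le -1$).} This is the main obstacle. I would first show that $\overline u_2(x,y)-y_\G\cdot\n u_1(x)$ is an admissible corrector. The heuristic is a two-scale expansion
\[
\te_2(\ue_2)(x,y)\simeq u_1(x)+\e\bigl(y_\G\cdot\n u_1(x)+\widehat u_2(x,y)+\xi_\G(x)\bigr),
\]
which after differentiation in $y$ matches $\te_2(\n\ue_2)\ru \n u_1+\n_y\widehat u_2=\n_y\overline u_2$; defining $\widehat u_2:=\overline u_2-y_\G\cdot\n u_1-\xi_\G$ with $\xi_\G$ chosen so that $\mathcal{M}_\G(\widehat u_2)=0$ gives \eqref{u2hat}. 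Rigorously, $\xi_\G(x)$ is obtained as the weak $L^2(\o)$-limit of $\mathcal{M}_\G(\te_2(\ue_2)-\te_1(\ue_1))/\e$ after subtracting the macroscopic slope $y_\G\cdot\n u_1$; its existence is guaranteed by Step~1, where for $\g=-1$ the bound gives a uniformly bounded sequence in $L^2(\o\times\G)$ and for $\g<-1$ the bound gives a sequence tending to $0$.

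\textbf{Step 5 (continued).} For $\g=-1$ we have $\|(\te_1(\ue_1)-\te_2(\ue_2))/\e\|_{L^2(\o\times\G)}\le C$ by \eqref{tediff}, so up to extraction this quotient converges weakly in $L^2(\o\times\G)$. Using the traces $\te_1(\ue_1)\rw u_1+\e\widehat u_1+\cdots$ and $\te_2(\ue_2)\rw u_1+\e(y_\G\cdot\n u_1+\widehat u_2+\xi_\G)+\cdots$ on $\G$, and remembering $y_\G=y-\mathcal{M}_\G(y)$ vanishes in mean on $\G$, the weak limit is precisely $\widehat u_1-\widehat u_2$ (this last identification is the technical core of \cite{DoLNTa}; I would reproduce the argument by testing against smooth $\f\in\d(\o)$ and passing to the limit via Lemma~\ref{phibordo}). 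For $\g<-1$, the same quotient tends to zero strongly, so its weak limit vanishes, yielding $\widehat u_1=\widehat u_2+\xi_\G$ on $\o\times\G$ as in \eqref{u1hat}. The hardest point is justifying that $\xi_\G$ is well defined and independent of $y$ on $\G$; this follows from the fact that the macroscopic slope contribution $y_\G\cdot\n u_1$ already has zero mean on $\G$, so the remaining mean of $\overline u_2-\widehat u_1$ is purely a function of $x$.
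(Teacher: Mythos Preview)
The paper does not actually prove Theorem~\ref{teconv}: it is stated with citations to \cite{up5,d,donngu,DoLNTa} and no proof is given, since it is a compilation of known results recalled for later use. Your sketch is a correct reconstruction of how those ingredients assemble: Steps~1--4 are accurate (the jump bound via \eqref{brd*} and Proposition~\ref{eqnorm}, the compactness results for each component, and the trace argument forcing $u_1=u_2$ when $\g<1$), and Step~5 captures the right mechanism, including the key observation that the rescaled jump $(\te_1(\ue_1)-\te_2(\ue_2))/\e$ is bounded in $L^2(\o\times\G)$ precisely when $\g\le -1$ and vanishes strongly when $\g<-1$.

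The only place where your outline is genuinely heuristic is the identification of the weak limit of the rescaled jump with $\widehat u_1-\widehat u_2$ and the construction of $\xi_\G$; you correctly flag this as ``the technical core of \cite{DoLNTa}'' and defer to that reference, which is exactly what the paper itself does. A full argument would require the auxiliary sequences $Z^\e_i=(\te_i(\ue_i)-\mathcal{M}_\G(\te_i(\ue_i)))/\e$ introduced in \cite{DoLNTa} rather than the informal two-scale expansion you wrote, but your description of what must be shown and why the scaling works is sound.
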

\section{Proof of the homogenization result}
In this section we prove the convergence results stated in Theorem \ref{maintheo}. The proof differs according to the values of the parameter $\g$.

For $\g<-1$, by choosing suitable test functions, it is possible to adapt the arguments of \cite{DoLNTa} leading to the classical homogenized problem of Bensoussan-Lions-Papanicolau in a fixed domain, see \cite{ben}.

For $\g=-1$, the above mentioned arguments do not apply. Indeed, in this case, the choice of the test functions leads to handle products of two weakly convergent sequences. We can overcome this difficulty by using lower semi-continuity arguments, due to the fact that we are dealing with variational inequalities. In addition, the constraint imposed on the chosen test functions gives rise to a nonlinear homogenized system where the cell problem consists in a variational inequality. 

For the cases $\g\in]-1,1[$ and $\g=1$, in order to analyse the asymptotic behaviour of the problem at microscopic level, we need to consider more regular test functions.\\
As far as it concerns the homogenized problem, in the case $\g\in]-1,1[$, it turns out to be the classical limit problem of Cioranescu-S.J.Paulin in perforated domains, see \cite{cp}.\\
While, when $\g=1$, again taking advantage of lower semi-continuity arguments, we obtain an obstacle-type homogenized problem.

\subsection{The case $\g< -1$}
Let $u^\e$ be the weak solution of problem \eqref{prob}. Theorem \ref{teconv}, for the case $\g<-1$, ensures the existence of $u_1\in H^1_0(\o)$, $\widehat{u}_1\in L^2(\o, H^1_{per}(Y_1))$ with $\mathcal{M}_{\G}(\widehat{u}_1)=0$ a.e. in $\o$ and $\widehat{u}_2\in L^2(\o, H^1(Y_2))$ such that, up to a subsequence,
\beq\label{convseccase}
\left\{\ba{lllll}
\ds \widetilde{u^\e_i} \ru \theta_i u_1 & \hbox{ weakly in } L^2(\o),\ i=1,2, \\[3mm]
\ds\te_1(\ue_1)\rw u_1 & \hbox{ strongly in } L^2(\o, H^1(Y_1)),\\[3mm]
\ds \te_2(\ue_2)\ru u_1 & \hbox{ weakly in }L^2(\o, H^1(Y_2)),\\[3mm]
\ds\te_1(\n\ue_1)\ru \n u_1 +\n_y \widehat{u}_1 & \hbox{ weakly in }L^2(\o\times Y_1),\\[3mm]
\ds \te_2(\n\ue_2)\ru \n u_1+\n_y \widehat{u}_2 & \hbox{ weakly in }L^2(\o\times Y_2).
\ea\right.
\eeq
We first take $\f\in\d(\o)$ and use $(\uoe +\f, \ute+\f)\in K^\e_\g$ as test function in \eqref{pe}, obtaining
\beq\label{star}
\ds\int_{\o_1^\e} A^\e\n \uoe\n \f dx+\int_{\o_2^\e} A^\e\n \ute\n \f dx \geq \int_{\o_1^\e} f\f dx+\int_{\o_2^\e} f\f dx, \quad \forall \f \in \d(\o).
\eeq
By (\textbf{A$1$}) and \eqref{estsol} one has, for $i=1,2$,
\beq
\int_{\Lambda_i^\e} A^\e\n \ue_i \n \f dx\leq \beta \|\n\ue_i\|_{L^2(\o_i^\e)}\|\n \f\|_{L^2(\Lambda_i^\e)}\leq\beta C\|\n \f\|_{L^2(\Lambda_i^\e)}\rw 0, \quad \e \rw 0,
\eeq
in view of the equicontinuity of the integral. Consequently, by Proposition \ref{pu}$_4$, inequality \eqref{star} can be rewritten as
\beq
\ba{cc}
\ds\frac{1}{|Y|}\int_{\o\times Y_1}\toe(A^\e)\toe(\n \uoe)\toe(\n \f) dxdy + \frac{1}{|Y|}\int_{\o\times Y_2}\tte(A^\e)\tte(\n \ute)\tte(\n \f) dxdy\\[5mm] \ds \geq \int_{\o_1^\e} f\f dx+\int_{\o_2^\e} f\f dx, \quad \forall \f \in \d(\o).
\ea
\eeq
We pass to the limit, as $\e$ tends to zero, in the previous inequality by using Proposition \ref{pu}$_3$ and \eqref{convseccase},  we get
\beq
\ds\frac{1}{|Y|}\int_{\o\times Y_1} A(y) (\n u_1+\n_y \widehat{u}_1)\n \f\, dx dy + \frac{1}{|Y|}\int_{\o\times Y_2} A(y)(\n u_1+\n_y \widehat{u}_2)  \n \f\, dxdy \geq  \int_\o f\f\, dx, \quad \forall \f \in \d(\o).
\eeq
The validity of the previous inequality for every function in $\d(\o)$, together with density arguments, gives
\beq\label{p1sc}
\ds\frac{1}{|Y|}\int_{\o\times Y_1} A(y)  (\n u_1+\n_y \widehat{u}_1)\n \f\, dx dy + \frac{1}{|Y|}\int_{\o\times Y_2} A(y) (\n u_1+\n_y \widehat{u}_2) \n \f\, dxdy = \int_\o f\f\, dx, \quad \forall \f \in H^1_0(\o).
\eeq
We now take $w\in\d(\o)$, $\Psi\in H^1_{per}(Y)$ and set $\Psi^\e(x)\doteq\Psi(\frac{x}{\e})$. Then, we use $(\uoe+\e w\Psi^\e, \ute+\e w\Psi^\e)\in K^\e_\g$ as test function in \eqref{pe}, we have
\beq
\ds \int_{\o_1^\e} A^\e\n \uoe\n (\e w\Psi^\e)\, dx  + \int_{\o_2^\e} A^\e\n \ute\n (\e w\Psi^\e)\, dx \geq \e\int_{\o_1^\e} f w\Psi^\e\,dx+\e\int_{\o_2^\e} f w\Psi^\e\,dx.
\eeq
Arguing as in \cite{DoLNTa}, we pass to the limit by unfolding and get
\beq\label{limitesc}
\ds \frac{1}{|Y|}\int_{\o\times Y_1} A(y)(\n u_1+\n_y \widehat{u}_1) \n_y \Phi\, dxdy + \frac{1}{|Y|}\int_{\o\times Y_2} A(y) (\n u_1 +\n_y \widehat{u}_2) \n_y \Phi\, dxdy\geq 0,
\eeq
where $\Phi(x,y)=w(x)\Psi(y)$. By density, \eqref{limitesc} holds for every $\Phi \in L^2(\o;H^1_{per}(Y))$.\\
Taking into account \eqref{u1hat} of Theorem \ref{teconv}, we set
\beq
\ds \widehat{u}\doteq\left\{\ba{ll}
\ds\widehat{u}_1 & \hbox{ in }Y_1,\\[2mm]
\ds \widehat{u}_2+\xi_\G & \hbox{ in }Y_2,
\ea\right.
\eeq
and extend it by periodicity to a function still denoted by $\widehat{u}$, which belongs to $L^2(\o,H^1_{per}(Y))$ and is such that $\mathcal{M}_\G(\widehat{u})=0$ a.e. in $\o$, in view of \eqref{u2hat}. Hence, problem \eqref{limitesc} can be written as follows:
\beq
\ds \frac{1}{|Y|}\int_{\o\times Y} A(y) (\n u_1+\n_y \widehat{u}) \n_y \Phi\, dxdy\geq 0,\quad \forall \Phi \in L^2(\o;H^1_{per}(Y)),
\eeq
which implies
\beq\label{phiuguale}
\ds \frac{1}{|Y|}\int_{\o\times Y} A(y) (\n u_1+\n_y \widehat{u}) \n_y \Phi\, dxdy=0,\quad \forall \Phi \in L^2(\o;H^1_{per}(Y)).
\eeq
By summing up problems \eqref{p1sc} and \eqref{phiuguale}, we obtain
\beq\label{punf2}
\left\{\ba{lll}
\ds \hbox{ Find } (u_1,\widehat{u})\in H^1_0(\o) \times L^2(\o, H^1_{per}(Y)), \hbox{ with }\mathcal{M}_\G(\widehat{u})= 0 \hbox{ a.e. in }\o,\ \hbox{ s.t. }\\[3mm]
\ds \frac{1}{|Y|}\int_{\o\times Y} A(y) (\n u_1+\n_y \widehat{u})(\n \f+ \n_y \Phi)\, dxdy= \int_\o f\f\, dx,\\[5mm]
\ds\forall \f \in H^1_0(\o),\ \forall \Phi \in L^2(\o;H^1_{per}(Y)).
\ea\right.
\eeq
The above problem is the one obtained in the classical work \cite{CiDaGr} dealing with the periodic unfolding method for fixed domains. Hence $(u_1,\widehat{u})$ is unique and convergences \eqref{convseccase} hold for the whole sequences. Moreover, if we define $A^0_\g$ as in \eqref{A0g}, by classical arguments $u_1$ results the unique solution of problem \eqref{probhom}.

\vskip 1cm
\subsection{The case $\g=-1$}
Let $u^\e$ be the weak solution of problem \eqref{prob}. Theorem \ref{teconv}, for $\g=-1$, assures that there exist $u_1\in H^1_0(\o)$, $\widehat{u}_1\in L^2(\o, H^1_{per}(Y_1))$ with $\mathcal{M}_{\G}(\widehat{u}_1)=0$ a.e. in $\o$ and $\widehat{u}_2\in L^2(\o, H^1(Y_2))$ such that convergences \eqref{convseccase} hold, together with
\beq\label{convtircase}
\ds\frac{\toe(\uoe)-\tte(\ute)}{\e}\ru \widehat{u}_1-\widehat{u}_2 \quad \hbox{ weakly in }L^2(\Omega \times \G),
\eeq
up to a subsequence.
We take $\f, w_i\in\d(\o)$, for $i=1,2$, $\Psi_1\in H^1_{per}(Y_1)$ and $\Psi_2\in H^1(Y_2)$ (extended by $Y$-periodicity to $\R^N$) such that
\beq\label{maggiore}
\ds w_1(x)\Psi_1(y) \geq  w_2(x)\Psi_2(y) \hbox{ a.e. in }\o\times\G.
\eeq
Set $\Psi_i^\e(x)\doteq\Psi_i(\frac{x}{\e})$, for $i=1,2$, one has  $w_1(x)\Psi_1^\e(x) \geq w_2(x)\Psi_2^\e(x)$ almost everywhere on $\gae$, so that one can choose $(\f+\e w_1\Psi_1^\e, \f+\e w_2\Psi_2^\e)\in K^\e_\g$ as test function in \eqref{pe} getting
\beq
\ba{lll}
\ds\int_{\o_1^\e} A^\e\n \uoe(\n \f+\n (\e w_1\Psi_1^\e)-\n \uoe) dx+\int_{\o_2^\e} A^\e\n \ute(\n \f+\n (\e w_2\Psi_2^\e)-\n \ute) dx\\[5mm]
\ds +\int_{\gae}h^\e[\ue](w_1\Psi_1^\e-w_2\Psi_2^\e)\,d\sigma\\[5mm]
\ds \geq \int_{\o_1^\e} f(\f+\e w_1\Psi_1^\e-\uoe) dx+\int_{\o_2^\e} f(\f+\e w_2\Psi_2^\e-\ute) dx+\e^{-1}\int_{\gae}h^\e[\ue]^2\,d\sigma.
\ea
\eeq
Let us observe that by \eqref{brd*}
\beq
\e^{-1}\int_{\gae}h^\e[\ue]^2\,d\sigma\geq \frac{\e^{-2}}{|Y|}\int_{\o \times \G} h(y)(\te_1(\ue_1)-\te_2(\ue_2))^2dxd\sigma_y.
\eeq
Hence, arguing analogously to the previous case and taking into account the lower semi-continuity of the norm with respect to the weak convergence, we pass to the limit by unfolding thanks to \eqref{convseccase} and \eqref{convtircase}. Set $\Phi_i(x,y)=w_i(x)\Psi_i(y)$, for $i=1,2$, by density arguments one gets 
\beq
\ba{lll}
%\ds \hbox{ Find } (u_1,\widehat{u}_1,\widehat{u}_2)\in H^1_0(\o) \times L^2(\o, H^1_{per}(Y_1))\times L^2(\o, H^1(Y_2)), \hbox{ with }\mathcal{M}_\G(\widehat{u}_1)= 0 \hbox{ a.e. in }\o,\ \hbox{ s.t. }\\[3mm]
\ds \frac{1}{|Y|}\int_{\o\times Y_1}A(y) (\n u_1+\n_y \widehat{u}_1) (\n (\f-u_1)+\n_y (\Phi_1-\widehat{u}_1))\, dxdy\\[5mm]
\ds + \frac{1}{|Y|}\int_{\o\times Y_2} A(y) (\n u_1 +\n_y \widehat{u}_2) (\n (\f-u_1)+\n_y (\Phi_2-\widehat{u}_2))\, dxdy\\[5mm]
\ds +\frac{1}{|Y|}\int_{\o\times\G}h(y)(\widehat{u}_1-\widehat{u}_2)((\Phi_1-\widehat{u}_1)-(\Phi_2-\widehat{u}_2)\,dxd\sigma_y\geq \int_\o f(\f-u_1)\, dx,\\[5mm]
\ds\forall \f \in H^1_0(\o),\ \forall \Phi_1\in L^2(\o;H^1_{per}(Y_1)),\ \forall \Phi_2 \in L^2(\o;H^1(Y_2)) : \Phi_1 \geq \Phi_2 \hbox{ in }\o\times\G,
\ea
\eeq
in view of \eqref{maggiore}.\\
Let us consider the following spaces:
\beq
\ds \mathcal{B}\doteq\{v=(v_1, \widehat{v}_1,\widehat{v}_2)\in H^1_0(\o)\times L^2(\o,W_{per}(Y_1))\times L^2(\o,H^1(Y_2))\},
\eeq
equipped with the norm
\beq
\ds \|v\|^2_{\mathcal{B}}\doteq \|\n v_1+\n_y \widehat{v}_1\|^2_{L^2(\o\times Y_1)}+\|\n v_1+\n_y \widehat{v}_2\|^2_{L^2(\o\times Y_2)}+\|\widehat{v}_1- \widehat{v}_2\|^2_{L^2(\o\times \G)}, \quad v=(v_1,\widehat{v}_1,\widehat{v}_2)\in \mathcal{B}
\eeq
see \cite[Lemma $5.3$]{donngu}, and
\beq
\ds \mathcal{W}\doteq\{(v_1, \widehat{v}_1,\widehat{v}_2)\in \mathcal{B}\, |\, \widehat{v}_1\geq \widehat{v}_2 \hbox{ in }\o\times \G\}.
\eeq
%We note that $\overline{W}$, given in \eqref{Wbar}, is a closed convex subset of $W_{per}(Y_1)\times H^1(Y_2)$ and we define
%\beq
%\ds L^2(\o, \overline{W})\doteq \{(v_1,v_2)\in L^2(\o, W_{per}(Y_1))\times L^2(\o,H^1(Y_2))|(v_1,v_2)(x,\cdot)\in \overline{W} \hbox{ a.e. }x\in \o \}.
%\eeq
Now, let us observe that
\beq\label{ucappucci}
\ds\widehat{u}_1\geq \widehat{u}_2\quad \hbox{ a.e. in }\o\times\G.
\eeq
Indeed, since $\uoe\geq\ute$ a.e. on $\gae$, one deduces that $\te_1(\uoe)\geq\te_2(\ute)$ a.e. in $\o\times \G$, hence \eqref{ucappucci} follows from convergence \eqref{convtircase}.\\
Thus, $(u_1,\widehat{u}_1,\widehat{u}_2)$ satisfies
\beq\label{puftc}
\left\{\ba{lll}
\ds \hbox{ Find } (u_1,\widehat{u}_1,\widehat{u}_2)\in \mathcal{W}\ \hbox{ s.t. }\\[3mm]
\ds \frac{1}{|Y|}\int_{\o\times Y_1}A(y) (\n u_1+\n_y \widehat{u}_1) (\n (\f-u_1)+\n_y (\Phi_1-\widehat{u}_1))\, dxdy\\[5mm]
\ds + \frac{1}{|Y|}\int_{\o\times Y_2} A(y) (\n u_1 +\n_y \widehat{u}_2) (\n (\f-u_1)+\n_y (\Phi_2-\widehat{u}_2))\, dxdy\\[5mm]
\ds +\frac{1}{|Y|}\int_{\o\times\G}h(y)(\widehat{u}_1-\widehat{u}_2)((\Phi_1-\widehat{u}_1)-(\Phi_2-\widehat{u}_2)\,dxd\sigma_y\geq \int_\o f(\f-u_1)\, dx,\\[5mm]
\ds\forall \f \in H^1_0(\o),\ \forall \Phi_1\in L^2(\o;H^1_{per}(Y_1)),\ \forall \Phi_2 \in L^2(\o;H^1(Y_2)) : \Phi_1 \geq \Phi_2 \hbox{ in }\o\times\G.
\ea\right.
\eeq
Taking into account our assumptions, by standard arguments on variational inequalities, one also gets that problem \eqref{puftc} admits a unique solution. Therefore convergences \eqref{convseccase} and \eqref{convtircase} hold for the whole sequences.

In order to describe the homogenized problem satisfied by $u_1$, we replace the test function $\f$ by $\f+u_1$ and, in view of \eqref{ucappucci}, $\Phi_i$ by $\widehat{u}_i$, for $i=1,2$, in \eqref{puftc}:
\beq
\ba{llll}
\ds \frac{1}{|Y|}\int_{\o\times Y_1}A(y) (\n u_1+\n_y \widehat{u}_1) \n \f\, dxdy + \frac{1}{|Y|}\int_{\o\times Y_2} A(y) (\n u_1 +\n_y \widehat{u}_2) \n \f \, dxdy\geq \int_\o f\f\, dx,\quad \forall \f \in H^1_0(\o).
\ea
\eeq
Now, taking $\f=-\f$ as test function we obtain
\beq\label{p1tc} 
\ds\frac{1}{|Y|}\int_{\o\times Y_1} A(y) (\n u_1+\n_y \widehat{u}_1)\n \f\, dx dy + \frac{1}{|Y|}\int_{\o\times Y_2} A(y)(\n u_1+\n_y \widehat{u}_2) \n \f\, dxdy = \int_\o f\f\, dx, \quad \forall \f \in H^1_0(\o).
\eeq
Lastly, by taking $\f=u_1$ as test function in \eqref{puftc}, one gets
\beq\label{carmen}
\ba{ll}
\ds \frac{1}{|Y|}\int_{\o\times Y_1}A(y) (\n u_1+\n_y \widehat{u}_1) (\n_y \Phi_1-\n_y \widehat{u}_1)\, dxdy + \frac{1}{|Y|}\int_{\o\times Y_2} A(y) (\n u_1 +\n_y \widehat{u}_2) (\n_y \Phi_2-\n_y \widehat{u}_2)\, dxdy\\[5mm]
\ds +\frac{1}{|Y|}\int_{\o\times\G}h(y)(\widehat{u}_1-\widehat{u}_2)((\Phi_1-\widehat{u}_1)-(\Phi_2-\widehat{u}_2))\,dxd\sigma_y\geq 0,\\[5mm]
\ds \forall \Phi_1\in L^2(\o;H^1_{per}(Y_1)),\ \forall \Phi_2 \in L^2(\o;H^1(Y_2)) : \Phi_1 \geq \Phi_2 \hbox{ in }\o\times\G.
\ea 
\eeq
To conclude, observe that classical arguments on variational inequalities give that the solution of the cell problem \eqref{celltircase} exists and it is unique, by endowing the space $H^1_{per}(Y_1)\times H^1(Y_2)$ with the equivalent norm
\beq
\ds \|(z_1,z_2)\|^2_{H^1_{per}(Y_1)\times H^1(Y_2)}\doteq \|\n z_1\|^2_{L^2(Y_1)}+\|\n z_2\|^2_{L^2(Y_2)}+\|z_1-z_2\|^2_{L^2(\G)}.
\eeq
Then, by the uniqueness of the solution of \eqref{celltircase} written for $\z=\n u_1$, we obtain that $\widehat{u}_1$ and $\widehat{u}_2$ in  \eqref{carmen}  can be expressed  as 
\beq\label{sara}
\widehat{u}_1(x,y)=\widehat{\chi}_1(y,\n u_1(x)) \quad \hbox{ and }\quad \widehat{u}_2(x,y)=\widehat{\chi}_2(y,\n u_1(x)).
\eeq
Finally, set $A^0_{-1}$ as in \eqref{ahom}, by \eqref{p1tc} and \eqref{sara} the homogenized problem satisfied by $u_1$ can be rewritten in the following form:
\beq
\ds\int_{\o}A^0_{-1}(\n u_1)\n \f\, dx = \int_\o f\f\, dx, \quad \forall \f \in H^1_0(\o),
\eeq
which gives problem \eqref{probhom}. The uniqueness of solutions of problems \eqref{celltircase} and \eqref{puftc} easily implies the uniqueness of problem \eqref{probhom}.

\subsection{The case $\g\in ]-1;1[$}
Let $u^\e$ be the weak solution of problem \eqref{prob}. Theorem \ref{teconv}, for $\g\in ]-1;1[$, assures that there exist $u_1\in H^1_0(\o)$, $\widehat{u}_1\in L^2(\o, H^1_{per}(Y_1))$ with $\mathcal{M}_{\G}(\widehat{u}_1)=0$ a.e. in $\o$ and $\overline{u}_2\in L^2(\o, H^1(Y_2))$ with $\mathcal{M}_{\G}(\overline{u}_2)=0$ a.e. in $\o$ such that convergences \eqref{convseccase} hold, up to a subsequence, where the last convergence is equivalent to
\bk\beq\label{convfirstcase}
\ds \te_2(\n\ue_2)\ru \n_y \overline{u}_2 \quad \hbox{ weakly in }L^2(\o\times Y_2),
\eeq
in view of \eqref{u2hat}.

We want to prove that $\n_y \overline{u}_2=0$ a.e. in $\o\times Y_2$. To this aim, we take $w\in\d(\o)$, $\Psi_2\in C^1_c(\overline{Y}_2)$, set $\Psi_2^\e(x)\doteq\Psi_2(\frac{x}{\e})$ and extend it by periodicity. From the boundedness of $w$ and $\Psi_2$, there exists $M\geq 0$ such that
\beq\label{em}
\ds \|w\|_{L^{\infty}(\o)}\|\Psi_2\|_{L^{\infty}(\overline{Y}_2)}\leq M.
\eeq
Hence, $\uoe-\ute+\e w\Psi_2^\e+\e M\geq 0$ a.e. on $\gae$, so that $(\uoe, \ute-\e w\Psi_2^\e-\e M)\in K^\e_\g$. Therefore, we can use it as test function in \eqref{pe} getting
\beq\label{eps}
\ds -\int_{\o_2^\e} A^\e\n \ute\n (\e w\Psi_2^\e)\, dx +\e^{\g+1}\int_{\gae}h^\e[\ue](w\Psi_2^\e+M)\,d\sigma\geq -\e\int_{\o_2^\e} f (w\Psi_2^\e+M)\,dx.
\eeq

By Lemma \ref{phibordo} and \eqref{teconv}, one has
\beq\label{B1}
\ba{ll}
\ds \e^{\g+1}\int_{\gae}h^\e[\ue]w\Psi_2^\e\,d\sigma=\e^\g\frac{1}{|Y|}\int_{\o \times\G}h(y)(\te_1(\uoe)-\te_2(\ute))\te_2(w)\Psi_2(y)dxd\sigma_y\\[5mm]
\ds \leq C\e^\g \|\te_1(\uoe)-\te_2(\ute)\|_{L^2(\o \times\G)}\|\te_2(w)\Psi_2(y)\|_{L^2(\o \times\G)}\leq C\e^\g\e^{\frac{1-\g}{2}}=C\e^{\frac{1+\g}{2}}.
\ea
\eeq
On the other hand
\beq\label{B2}
\ba{ll}
\ds \e^{\g+1}M\int_{\gae}h^\e[\ue]\,d\sigma \leq C\e^{\g+1} \|[\ue]\|_{L^2(\gae)}|\gae|^{\frac{1}{2}}\leq C\e^{\frac{1+\g}{2}}.
\ea
\eeq
Since $\g>-1$, by passing to the limit in \eqref{B1} and \eqref{B2}, as $\e$ tends to zero, we obtain
\beq\label{B3}
\ds \lim_{\e\rw 0}\e^{\g+1}\int_{\gae}h^\e[\ue](w\Psi_2^\e+M)\,d\sigma=0
\eeq
and, thus, \eqref{eps} gives
\beq\label{limite}
\ds \frac{1}{|Y|}\int_{\o\times Y_2}A\n_y \overline{u}_2 \n_y \Phi_2\, dxdy\leq 0,
\eeq
where $\Phi_2(x,y)=w(x)\Psi_2(y)$. By density, \eqref{limite} holds for every $\Phi_2 \in L^2(\o;H^1(Y_2))$.\\
Now, by taking $\Phi_2=\overline{u}_2$ in \eqref{limite} and using (\textbf{A$1$}), one obtains that 
\beq\label{grad0}
\ds\n_y \overline{u}_2=0\quad \hbox{ a.e. in } \o\times Y_2.
\eeq
Arguing as for the case $\g<-1$, we take $\f\in\d(\o)$ and use $(\uoe +\f, \ute+\f)\in K^\e_\g$ as test function in \eqref{pe} obtaining
\beq\label{p1firstcase}
\ds\frac{1}{|Y|}\int_{\o\times Y_1} A (\n u_1+\n_y \widehat{u}_1)\n \f\, dx dy = \int_\o f\f\, dx, \quad \forall \f \in H^1_0(\o),
\eeq
in view of \eqref{grad0}.\\
Let $C^\infty_{per}(Y_1\cup\Gamma)\doteq\{v_{|Y_1\cup\Gamma}\,|\,v\in C^\infty_{per}(Y)\}$. It is easily seen that $\overline{C^\infty_{per}(Y_1\cup\Gamma)}= H^1_{per}(Y_1)$.
Hence, if we take $w\in\d(\o)$ and $\Psi_1\in C^\infty_{per}(Y_1\cup\Gamma)$, there exists $M\geq 0$ such that
\beq\label{em}
\ds \|w\|_{L^{\infty}(\o)}\|\Psi_1\|_{L^{\infty}(Y_1\cup \Gamma)}\leq M.
\eeq
Thus, set $\Psi_1^\e(x)\doteq\Psi_1(\frac{x}{\e})$, we can choose $(\uoe+\e w\Psi_1^\e+\e M, \ute)\in K^\e_\g$ as test function in \eqref{pe} getting:
\beq
\ds \int_{\o_1^\e} A^\e\n \uoe\n (\e w\Psi_1^\e)\, dx +\e^{\g+1}\int_{\gae}h^\e[\ue](w\Psi_1^\e+M)\,d\sigma\geq \e\int_{\o_1^\e} f (w\Psi_1^\e+M)\,dx.
\eeq
By unfolding and arguing as in \eqref{B2} and \eqref{B3}, one has
\beq
\ds \frac{1}{|Y|}\int_{\o\times Y_1}A(y)(\n u_1+\n_y \widehat{u}_1) \n_y \Phi_1\, dxdy\geq 0,
\eeq
where $\Phi_1(x,y)=w(x)\Psi_1(y)$. By density, the previous inequality holds for every $\Phi_1 \in L^2(\o;H^1_{per}(Y_1))$, which implies
\beq\label{p2firstcase}
\ds \frac{1}{|Y|}\int_{\o\times Y_1}A(y)(\n u_1+\n_y \widehat{u}_1) \n_y \Phi_1\, dxdy= 0,\quad \forall \Phi_1\in L^2(\o;H^1_{per}(Y_1)).
\eeq
Finally, we add \eqref{p1firstcase} and \eqref{p2firstcase}, and obtain the following problem:
\beq\label{prunffirtscase}
\left\{\ba{lll}
\ds \hbox{ Find } (u_1,\widehat{u}_1)\in H^1_0(\o) \times L^2(\o, H^1_{per}(Y_1)), \hbox{ with }\mathcal{M}_\G(\widehat{u}_1)= 0 \hbox{ a.e. in }\o,\ \hbox{ s.t. }\\[5mm]
\ds \frac{1}{|Y|}\int_{\o\times Y_1} A(y) (\n u_1+\n_y \widehat{u}_1)(\n \f+\n_y \Phi_1)\, dx dy =\int_\o f\f\, dx,\\[5mm]
\ds \forall \f \in H^1_0(\o), \quad \forall \Phi_1\in L^2(\o;H^1_{per}(Y_1)).
\ea\right.
\eeq
The above problem is the one obtained in the classical work \cite{up5} dealing with the periodic unfolding method for perforated domains. Hence $(u_1,\widehat{u}_1)$ is unique and convergences \eqref{convseccase} and \eqref{convfirstcase} (where $\n_y \overline{u}_2\equiv 0$) hold for the whole sequences. 
Moreover, if we define $A^0_\g$ as in \eqref{a0g}, by classical arguments $u_1$ results the unique solution of problem \eqref{probhom}.

\vskip 1cm
\subsection{The case $\g=1$}
Let $u^\e$ be the weak solution of problem \eqref{prob}. Arguing as in the case $\gamma\in ]-1;1[$, by Theorem \ref{teconv} we get again \eqref{grad0}, even in the case $\g=1$. We deduce also the existence of $u_1\in H^1_0(\o)$, $\widehat{u}_1\in L^2(\o, H^1_{per}(Y_1))$ with $\mathcal{M}_{\G}(\widehat{u}_1)=0$ a.e. in $\o$ and  ${u}_2 \in L^2(\o)$ such that, up to a subsequence,
\beq\label{convlastcase}
\left\{\ba{lllll}
\ds \widetilde{u^\e_i} \ru \theta_i u_i & \hbox{ weakly in } L^2(\o),\ i=1,2, \\[3mm]
\ds\te_1(\ue_1)\rw u_1 & \hbox{ strongly in } L^2(\o, H^1(Y_1)),\\[3mm]
\ds\te_2(\ue_2)\ru u_2 & \hbox{ weakly in }L^2(\o, H^1(Y_2)),\\[3mm]
\ds\te_1(\n\ue_1)\ru \n u_1 +\n_y \widehat{u}_1 & \hbox{ weakly in }L^2(\o\times Y_1),\\[3mm]
\ds \te_2(\n\ue_2)\ru 0 & \hbox{ weakly in }L^2(\o\times Y_2),
\ea\right.
\eeq
and
\beq\label{phiucappuccio}
\ds \frac{1}{|Y|}\int_{\o\times Y_1}A(y)(\n u_1+\n_y \widehat{u}_1) \n_y \Phi_1\, dxdy= 0,\quad \forall \Phi_1\in L^2(\o;H^1_{per}(Y_1)).
\eeq
By standard arguments, it follows that
\beq\label{ucappuccio1}
\widehat{u}_1(x,y)=-\sum_{j=1}^N \frac{\partial u_1}{\partial x_j}(x) \chi_j(y)
\eeq
where $\chi_j$, for $j=1, ..., N$, are the unique solutions of the cell problems \eqref{cell1c}.

Now, we take $\f_i\in\d(\o)$, $i=1,2$, such that $\f_1\geq\f_2$ in $\o$. By choosing $(\f_1,\f_2)\in K^\e_\g$ as test function in \eqref{pe} we get
\beq
\ba{ll}
\ds\int_{\o_1^\e} A^\e\n \uoe\n (\f_1-\uoe)dx +\int_{\o_2^\e} A^\e\n \ute\n (\f_2-\ute)dx +\e\int_{\gae}h^\e[\ue][\f]d\sigma\\[3mm]
\ds\geq \int_{\o_1^\e} f(\f_1-\uoe)dx+\int_{\o_2^\e} f(\f_2-\ute)dx+\e\int_{\gae}h^\e([\ue])^2d\sigma.
\ea
\eeq
We want to pass to the limit by unfolding in the previous inequality. To this aim let us observe that by \eqref{brd*}
\beq\label{B5}
\e\int_{\gae}h^\e[\ue]^2\,d\sigma\geq \frac{1}{|Y|}\int_{\o \times \G} h(y)(\te_1(\ue_1)-\te_2(\ue_2))^2dxd\sigma_y.
\eeq
Hence, by \eqref{B5} and the lower semi-continuity of the norm with respect to the weak convergence, we obtain
\beq\label{prima}
\ba{lll}
\ds \frac{1}{|Y|}\int_{\o\times Y_1} A(y) (\n u_1+\n_y \widehat{u}_1)(\n \f_1-\n u_1-\n_y\widehat{u}_1)\, dx dy+\frac{1}{|Y|}\int_{\o\times \G} h(y) (u_1-u_2)(\f_1-\f_2)\, dx dy\\[5mm]
\ds \geq \theta_1\int_\o f(\f_1-u_1)\, dx+\theta_2\int_\o f(\f_2-u_2)\, dx+\frac{1}{|Y|}\int_{\o\times \G} h(y) (u_1-u_2)^2\, dx dy.
\ea
\eeq
If we write \eqref{phiucappuccio} for $\Phi_1=\widehat{u}_1$ and we add it to \eqref{prima}, we get
\beq
\ba{lll}
\ds \frac{1}{|Y|}\int_{\o\times Y_1} A(y) (\n u_1+\n_y \widehat{u}_1)(\n \f_1-\n u_1)\, dx dy+\frac{1}{|Y|}\int_{\o\times \G} h(y) (u_1-u_2)((\f_1-\f_2)-(u_1-u_2))\, dx dy\\[5mm]
\ds \geq \theta_1\int_\o f(\f_1-u_1)\, dx+\theta_2\int_\o f(\f_2-u_2)\, dx.
\ea
\eeq

We observe that
\beq\label{u12}
\ds u_1\geq u_2\quad \hbox{ a.e. in }\o.
\eeq
Indeed, since $\uoe\geq\ute$ a.e. on $\gae$, one deduces that $\te_1(\uoe)\geq\te_2(\ute)$ a.e. in $\o\times \G$, hence \eqref{u12} follows from convergences \eqref{convlastcase}.

Hence, in view of \eqref{ucappuccio1}, \eqref{u12} and the fact that $A^0_1$ is given by \eqref{a0g}, set $K\doteq\{(\f_1,\f_2)\in H^1_0(\o)\times L^2(\o) \,|\, \f_1\geq \f_2 \hbox{ in }\o\}$, we get that $(u_1,u_2)$ is solution of the following problem:
\beq
\left\{\ba{lll}
\ds \hbox{ Find } (u_1,u_2)\in K \hbox{ s.t. }\\[3mm]
\ds \int_{\o} A^0_1 \n u_1(\n \f_1-\n u_1)\, dx +\frac{\mathcal{M}_\G(h)}{|Y|}\int_{\o} (u_1-u_2)[(\f_1-\f_2)-(u_1-u_2)]\, dx \\[5mm]
\ds \geq \theta_1\int_\o f(\f_1-u_1)\, dx+\theta_2\int_\o f(\f_2-u_2)\, dx,\quad \forall (\f_1,\f_2)\in K.
\ea\right.
\eeq
Let us denote by $\mathcal{A}$ the operator
$$ \mathcal{A}: K \rw K'$$ given by
\beq
\left\langle\mathcal{A}(u),v\right\rangle_{K',K}\doteq\int_{\o} A^0_1 \n u_1\n v_1\, dx +\frac{\mathcal{M}_\G(h)}{|Y|}\int_{\o} (u_1-u_2)(v_1-v_2)dx.
\eeq
By $(\textbf{A}4)$ and the fact that $A^0_1\in \mathcal{M}(\alpha, \frac{\beta^2}{\alpha}, \o)$ (see \cite{CSJP}), one has
\beq
\ba{ll}
\ds \left\langle\mathcal{A}(w)-\mathcal{A}(v),w-v\right\rangle=\int_{\o} A^0_1 \n (w_1-v_1)\n (w_1-v_1)\, dx +\frac{\mathcal{M}_\G(h)}{|Y|}\int_{\o} [(w_1-w_2)-(v_1-v_2)]^2\, dx\\[5mm]
\ds \geq \alpha \|w_1-v_1\|_{H^1_0(\o)}^2+h_0 \|(w_1-w_2)-(v_1-v_2)\|_{L^2(\o)}^2>0, \quad \hbox{ if }w\neq v.
\ea
\eeq
From this and \cite[Thereom 8.3 (pag. $248$)]{lions}, we deduce that the couple $(u_1,u_2)$ is unique. Hence, by \eqref{ucappuccio1}, convergences \eqref{convlastcase} hold for the whole sequences.

%
%\blu se la mettiamo, bisogna definire $\te$:
%\begin{remark}
%...Instead of $(A1)$, one can suppose that there exists a matrix $B$ such that
%$$B^\e=\te(A^\e) \rw B\quad \hbox{ a.e. (or in measure) in }\Omega \times Y.$$
%This implies that $B\in\mathcal{M}(\alpha, \beta, \Omega \times Y)$...
%\end{remark}\bk

\section*{Acknowledgement}
This work was supported by the project PRIN2022 D53D23005580006 ``Elliptic and parabolic problems, heat kernel estimates and spectral theory" and by the projects GNAMPA2023 (CUP E53C22001930001) ``Problemi  misti locali e non locali con coefficienti singolari" and ``Analisi asintotica di problemi al contorno in strutture sottili o rugose".

\end{document}